\documentclass[11pt]{article}
\usepackage{amsmath,enumerate,amsfonts,amssymb,color,graphicx,amsthm,mathrsfs}
\usepackage{extarrows}
\allowdisplaybreaks[4]
\usepackage{authblk}
\usepackage{hyperref}
\usepackage[margin=1.25in]{geometry}
\usepackage{cite}
\hypersetup{
	colorlinks,
	citecolor=blue,
	filecolor=blue,
	linkcolor=blue,
	urlcolor=black
}

\numberwithin{equation}{section}
\newtheorem{thm}{Theorem}[section]

\newcommand{\R}{\mathbb{R}}

\newcommand{\be}{\begin{equation}}
	\newcommand{\ee}{\end{equation}}

\newtheorem{definition}{Definition}[section]
\newtheorem{lemma}{Lemma}[section]

\newtheorem{proposition}{Proposition}[section]
\newtheorem{corollary}{Corollary}[section]

\newcommand{\Ha}{\mathbb{H}^n}

\newsavebox{\Citename}

\numberwithin{equation}{section}



\begin{document}

	\title{\bf A Sharp Sobolev Inequality on Compact CR Manifolds}
	
	\author{ \sc Zhongwei Tang\thanks{Z. Tang is supported by National Natural Science Foundation of China (12471099,12271223).} ,\, Bingwei Zhang\\
		{\small School of Mathematical Sciences, Laboratory of Mathematics and Complex Systems, MOE\\
			\small	Beijing Normal University, Beijing 100875, People’s Republic of China	}
	}
	
	\date{}
	
	\maketitle
	
	\begin{abstract}
	For any $n\geq 1$, let \((M,\theta)\) be a compact strictly pseudoconvex  hypersurface type CR manifold with dimension  $2n+1$,  \(Q=2n+2\) be its homogeneous dimension. We prove the sharp Folland--Stein Sobolev inequality on \(M\): if \(Q^*=2Q/(Q-2)\), then there exists a constant \(A>0\), depending only on \((M,\theta)\), such that
		\[
		\|u\|_{L^{Q^*}(M)}^2 \le K(n)^2\|\nabla_b u\|_{L^2(M)}^2 + A\|u\|_{L^2(M)}^2, \qquad u\in S^{1,2}(M).
		\]
		Here \(K(n)\) is the sharp constant in the Folland--Stein Sobolev inequality on the Heisenberg group \(\mathbb H^n\).
	\end{abstract}
	
	{\noindent \small \bf Key words:} Sharp Sobolev inequality, compact CR manifolds,  Subelliptic equations,  Blow up analysis.
	
	{\noindent \small \bf Mathematics Subject Classification (2020)}\quad 32V20 · 35H20 ·  46E35
	\section{Introduction}
	
	Sharp Sobolev inequalities are among the basic tools in the study of partial differential equations, especially those arising from geometry and physics.  For \(N \ge 3\), Aubin \cite{A1976} and Talenti \cite{T1976} determined the optimal constant and all extremals in the sharp Sobolev inequality on \(\mathbb R^N\). More precisely, if \(2^*=2N/(N-2)\) and \(S_N\) denotes the optimal Sobolev constant, then
	\[
	S_N^{-2}=\inf \bigg\{\frac{\|\nabla u\|_{L^2(\mathbb{R}^N)}^2}{\|u\|_{L^{2^*}(\mathbb{R}^N)}^2}: u \in\  L^{2^*}(\mathbb{R}^N) \backslash\{0\},|\nabla u| \in L^2(\mathbb{R}^N)\bigg\}.
	\]
	Moreover, \(S_N^2=4 /[N(N-2) \sigma_N^{2 / N}]\), where \(\sigma_N\) is the volume of the standard \(N\)-sphere \(\mathbb{S}^N\). Up to multiplication by nonzero constants, all minimizers are given by
	\[
	U_{y, \lambda}(x)=\lambda^{(N-2) / 2} U(\lambda(x-y)),  \quad y \in \mathbb{R}^N, \quad \lambda>0,
	\]
	where
	\[
	U(x)=U_{0, 1}(x) =\Big(\frac{1}{1+a_N|x|^2}\Big)^{(N-2) / 2}, \quad a_N=\frac{1}{N(N-2) S_N^{2}}.
	\]
	With this choice of \(a_N\), the function \(U\) satisfies
	\[
	-\Delta U=S_N^{-2} U^{2^*-1} \quad \text { in } \mathbb{R}^N.
	\]
	With this normalization, \(U\) is the positive extremal satisfying
	\[
	U \in D^{1,2}(\mathbb{R}^N), \quad 0<U \leq 1, \quad U(0)=1, \quad \text{ and }\int_{\mathbb{R}^N} U^{2^*}\,d x=1.
	\]
	
	Aubin further studied the corresponding best-constant problem on compact Riemannian manifolds in \cite{A1976}. He proved that if \((M,g)\) has constant sectional curvature, then the Euclidean optimal constant remains valid up to a lower-order \(L^2\)-term: there exists a constant \(A>0\), depending only on \((M,g)\), such that
	\[
	\|u\|_{L^{2^*}(M,g)}^2 \le S_N^2\|\nabla_g u\|_{L^2(M,g)}^2 + A\|u\|_{L^2(M,g)}^2, \qquad u\in H^1(M).
	\]
	For arbitrary compact Riemannian manifolds, Aubin \cite{A1976} proved an almost sharp version in which \(S_N^2\) is replaced by \(S_N^2 + \varepsilon\), with the lower-order constant depending on \(\varepsilon\). He conjectured that the sharp inequality with the Euclidean constant \(S_N\) should hold on any compact Riemannian manifold; this conjecture was later proved by Hebey and Vaugon \cite{HV1996}. This best-constant viewpoint is part of the \(AB\) program in geometric analysis, whose purpose is to preserve the sharp model constant in the leading term and to understand the lower-order correction terms forced by the geometry; see Druet--Hebey \cite{DH2002}. The same best-constant problem was also studied by Hebey and Vaugon on complete, possibly noncompact Riemannian manifolds \cite{HV1995}. Related developments include sharp Sobolev inequalities under geometric assumptions on noncompact manifolds \cite{S1992,L1999,C1994}, general \(L^p\)-Sobolev and higher-order inequalities \cite{H1996,H1999,D1998}, and sharp trace inequalities on manifolds with boundary \cite{E1998,E1992,LZ1997,LZ1998}. In \cite{LR2003}, Li and Ricciardi obtained a refined sharp Sobolev inequality on compact Riemannian manifolds. More precisely, if \((M,g)\) is a smooth compact Riemannian manifold without boundary of dimension \(N\geq 6\), then there exists a constant \(A>0\), depending only on \((M,g)\), such that
	\[
	\|u\|_{L^{2^*}(M,g)}^2 \le S_N^2\int_M(|\nabla_g u|^2+c_N R_g u^2)\,dv_g + A\|u\|_{L^{2N/(N+2)}(M,g)}^2
	\]
	for all \(u\in H^1(M)\), where \(2^*=2N/(N-2)\), \(c_N=(N-2)/[4(N-1)]\), and \(R_g\) is the scalar curvature. They also proved sharpness results for the Euclidean constant, the scalar curvature term, and, in the non-locally conformally flat case, the exponent of the remainder term. More recently, the first author, together with Xiong and Zhou, revisited sharp Sobolev inequalities involving boundary terms and clarified the role of the boundary mean curvature in \cite{TXZ2021}. The first author and Zhou also studied a weighted Sobolev--Poincar\'e--type trace inequality on Riemannian manifolds in \cite{TZ2026}. These results provide further examples of how sharp Euclidean constants interact with boundary geometry and lower-order correction terms.
	
	The Heisenberg group, equipped with its standard contact structure and non-isotropic dilations, plays the role of the Euclidean model in Cauchy--Riemann(CR) geometry. In \cite{FS1974}, Folland and Stein established the Folland--Stein spaces on the Heisenberg group and on CR manifolds, which are the subelliptic counterparts of Sobolev spaces on Riemannian manifolds. They also proved the corresponding Sobolev embedding theorem in this setting. The natural dimension in this embedding is not the topological dimension \(2n+1\), but the homogeneous dimension \(Q=2n+2\). More precisely, if \(1<p<Q\) and \(p^*=pQ/(Q-p)\), then the Folland--Stein embedding takes the form
	\[
	S^{1,p}(M)\hookrightarrow L^{p^*}(M).
	\]
	
	On the Heisenberg group \(\mathbb{H}^n\), let \(K(n,p)\) denote the optimal constant in the inequality
	\[
	\Big(\int_{\mathbb H^n}|u|^{p^*}\,dv_0\Big)^{1/p^*} \leq K(n,p) \Big(\int_{\mathbb H^n}|\nabla_{\mathbb H}u|^p\,dv_0\Big)^{1/p}.
	\]
	For \(p=2\), the sharp form of this inequality was obtained by Jerison and Lee in their works on the CR Yamabe problem \cite{JL1987,JL1988,JL1989}. Thus, if \(Q^*=2Q/(Q-2)\), then \(K(n):=K(n,2)\) satisfies
	\[
	\Big(\int_{\mathbb H^n}|u|^{Q^*}\,dv_0\Big)^{2/Q^*} \leq K(n)^2\int_{\mathbb H^n}|\nabla_{\mathbb H}u|^2\,dv_0,
	\]
	for all functions \(u\) for which both sides are finite. Jerison and Lee also classified all extremals, which we will recall in the next section together with the preliminary material on pseudohermitian geometry and CR normal coordinates.

	Frank and Lieb \cite{FL2012} determined the sharp constants and classified all extremals for the Hardy--Littlewood--Sobolev inequalities on the Heisenberg group for \(0<\lambda<Q\), with the symmetric exponent \(r=2Q/(2Q-\lambda)\). By duality, their results yield sharp Sobolev inequalities for the sub-Laplacian and its conformally covariant fractional analogues. Their rearrangement-free approach establishes the existence of optimizers and classifies them through conformal normalization, second variation and spectral analysis on the CR sphere. In particular, they gives a direct proof of the Jerison--Lee inequality and its equality cases.

	Following the work of Frank and Lieb, Yan \cite{Y2023} established improved Sobolev inequalities for integer-order CR GJMS operators under higher-order moment constraints. His later work \cite{Y2025} treats both integer and fractional orders, combining improved inequalities with commutator identities to give alternative proofs of the existence and classification of extremals for the corresponding sharp inequalities obtained by Frank and Lieb \cite{FL2012}.

	Shi~\cite{S2018} extended the AB program to compact CR manifolds in the full Folland--Stein range. In particular, for every compact strictly pseudoconvex CR manifold \((M,\theta)\), every \(1<p<Q\), and every \(\varepsilon>0\), there exists \(A_\varepsilon>0\) such that
	\[
	\|u\|_{L^{p^*}(M)}^p \leq \bigl(K(n,p)^p+\varepsilon\bigr) \|\nabla_bu\|_{L^p(M)}^p + A_\varepsilon\|u\|_{L^p(M)}^p, \qquad u\in S^{1,p}(M).
	\]
	Equivalently, in the terminology of the AB program, the infimum of the leading constant on \(M\) is the Heisenberg sharp constant \(K(n,p)^p\). Specializing to \(p=2\), one obtains the almost sharp Folland--Stein inequality
	\begin{equation}\label{equShi}
		\|u\|_{L^{Q^*}(M)}^2 \leq \bigl(K(n)^2+\varepsilon\bigr)\|\nabla_bu\|_{L^2(M)}^2 + A_\varepsilon\|u\|_{L^2(M)}^2 .
	\end{equation}
	Related Aubin-type almost sharp inequalities for the principal parts of CR GJMS operators on compact strictly pseudoconvex CR manifolds were obtained by Yan \cite[Theorem~1.8]{Y2023}. This naturally leads to the sharp best-constant problem on compact CR manifolds: whether the small loss \(\varepsilon\) in the leading constant can be removed. Our main result gives an affirmative answer in the case \(p=2\), by proving the sharp compact inequality with the exact leading constant \(K(n)^2\).
	
	\begin{thm}\label{Mainthm}
		Let \((M, \theta)\) be a \(C^{\infty}\) compact strictly pseudoconvex  hypersurface type CR manifold with dimension $2n+1$ for any $n\geq 1$,  there exists a constant \(A>0\), depending only on \((M, \theta)\), such that for all \(\varphi \in S^{1, 2}(M, \theta)\), there holds
		\begin{equation}\label{Mainneq}
			\|\varphi\|_{L^{Q^*}(M)}^2 \leq K(n)^2\|\nabla_b \varphi\|_{L^2(M)}^2+A\|\varphi\|_{L^2(M)}^2.
		\end{equation}
	\end{thm}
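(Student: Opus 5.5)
We argue by contradiction, following the Hebey--Vaugon blow-up scheme as adapted by Li--Ricciardi, with CR normal coordinates replacing Riemannian normal coordinates. Suppose that for every $\alpha>0$ the inequality \eqref{Mainneq} with $A=\alpha$ fails. Then
\[
\lambda_\alpha:=\inf\Big\{\int_M |\nabla_b u|^2+\alpha K(n)^{-2}\int_M u^2 \ :\ \|u\|_{L^{Q^*}(M)}=1\Big\}<K(n)^{-2}.
\]
Because this level is strictly below the Heisenberg threshold, a concentration--compactness argument yields a minimizer. This is standard once Shi's almost sharp inequality \eqref{equShi} is available: it excludes concentration at levels below $K(n)^{-2}$. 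We may take the minimizer $u_\alpha\ge 0$, since $|\nabla_b|u||=|\nabla_b u|$ a.e. The Euler--Lagrange equation is
\[
-\Delta_b u_\alpha+\alpha K(n)^{-2}u_\alpha=\lambda_\alpha u_\alpha^{Q^*-1}.
\]
Folland--Stein subelliptic regularity and the Harnack inequality make $u_\alpha$ smooth and positive.

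Next we analyse the blow-up as $\alpha\to\infty$. Since $\alpha\|u_\alpha\|_2^2\le K(n)^{-2}$, we get $u_\alpha\to 0$ in $L^2$. Hence $u_\alpha$ concentrates: let $m_\alpha=\max u_\alpha=u_\alpha(x_\alpha)\to\infty$, and set $\mu_\alpha=m_\alpha^{-2/(Q-2)}$. Work in CR normal coordinates (Jerison--Lee pseudohermitian normal coordinates) centred at $x_\alpha$, in which $\theta$ agrees with the Heisenberg form to high order. Rescale $v_\alpha(z,t)=\mu_\alpha^{(Q-2)/2}u_\alpha(\mu_\alpha z,\mu_\alpha^2 t)$. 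Then $v_\alpha$ converges in $C^{2,\beta}_{loc}$ to a solution of $-\Delta_{\mathbb H}v=\lambda v^{Q^*-1}$ on $\mathbb H^n$ with $v(0)=1$, and we must have $\lambda=\lim\lambda_\alpha=K(n)^{-2}$. By Jerison--Lee's classification, $v$ is the standard bubble. The rescaled mass term forces $\alpha\mu_\alpha^2\to 0$.

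The heart of the proof, and the step we expect to be the main obstacle, is a sharp pointwise decay estimate
\[
d(x_\alpha,x)^{(Q-2)/2}u_\alpha(x)\le C
\]
together with
\[
u_\alpha(x)\le C\mu_\alpha^{(Q-2)/2}\,d(x_\alpha,x)^{2-Q},
\]
where $d$ is the Carnot--Carathéodory (or Korányi-type) distance. The first bound follows from a standard argument excluding a second blow-up point, using the energy bound $\lambda_\alpha<K(n)^{-2}$ together with the subelliptic $\varepsilon$-regularity. The second bound, the bubble upper bound, is obtained by comparison with the Green function of $-\Delta_b+\alpha K(n)^{-2}$ outside a ball of radius $R\mu_\alpha$. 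We would first try the maximum principle applied to $\mu_\alpha^{(Q-2)/2}G(x_\alpha,\cdot)$, using the Folland--Stein expansion $G\sim c\,d^{2-Q}$. Non-commutativity and the anisotropic distance make this the delicate part.

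With these estimates in hand, we conclude via a Pohozaev/test-function contradiction. Multiply the equation by $u_\alpha$ times a cutoff and compare $\int|\nabla_b u_\alpha|^2$ with the Heisenberg energy in normal coordinates. The metric errors are $O(|z|^2+|t|)$ in the normal coordinates, and they contribute $O(\mu_\alpha^2)$, using the decay bound to control integrability (for $n=1$ there is a logarithmic term, which is still $o(1)\cdot\alpha\mu_\alpha^2$-type after care). On the other hand, the term $\alpha K(n)^{-2}\int u_\alpha^2$ is bounded below by $c\,\alpha\mu_\alpha^2$, since the bubble has $\int v^2$ over balls of size $\mu_\alpha^{-1}$ that is bounded below (divergent when $Q\le4$). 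Combining this with $\lambda_\alpha<K(n)^{-2}$ and the sharp Heisenberg inequality applied to the cut-off function gives
\[
c\,\alpha\mu_\alpha^2\le C\mu_\alpha^2\,(\text{or }C\mu_\alpha^2|\log\mu_\alpha|),
\]
which is impossible for $\alpha$ large. This contradiction yields the constant $A$.
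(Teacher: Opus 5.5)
Your architecture matches the paper's: contradiction, minimizers below $K(n)^{-2}$, blow-up to the Jerison--Lee bubble, a bubble upper bound via a Green function, and a final comparison of the cut-off function with the sharp Heisenberg inequality. Getting the minimizer by concentration--compactness plus Shi's inequality is a fine substitute for the paper's subcritical approximation. The trouble is that the step you call the heart of the proof is not supplied, and the tool you propose for it does not work.

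Since $-\Delta_b u_\alpha+\alpha K(n)^{-2}u_\alpha=\lambda_\alpha u_\alpha^{Q^*-1}\ge0$, $u_\alpha$ is a \emph{super}solution. So comparing it with $\mu_\alpha^{(Q-2)/2}G(x_\alpha,\cdot)$ by the maximum principle gives nothing unless you absorb $V=\lambda_\alpha u_\alpha^{Q^*-2}$ into the operator. You would then need a positive supersolution of $-\Delta_b+\alpha K(n)^{-2}-V$ decaying like $\rho^{2-Q}$. Because $\alpha\mu_\alpha^2\to0$, the mass term cannot absorb $V$ on the region $R\mu_\alpha\le\rho\ll\alpha^{-1/2}$, where you only know $V\le\varepsilon\rho^{-2}$. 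The Euclidean remedy, a barrier $\rho^{2-Q+\delta}$, breaks down on $\mathbb H^n$. Indeed $-\Delta_{\mathbb H}\rho_0^{2-Q+\delta}=\delta(Q-2-\delta)\rho_0^{-Q+\delta}|\nabla_{\mathbb H}\rho_0|^2$, and $|\nabla_{\mathbb H}\rho_0|^2$ is a multiple of $|z|^2/\rho_0^2$, which vanishes on the $t$-axis while $V$ does not.

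The idea you are missing is to avoid pointwise barriers altogether. Set $\Phi_\ell=\mu_\ell^{(Q-2)/2}G_{q_\ell}$, where $G$ is the Green function of the fixed operator $-\Delta_b+1$. Then:
\begin{itemize}
\item the quotient $\zeta_\ell=u_\ell/\Phi_\ell$ satisfies $\int_M\Phi_\ell^2\langle\nabla_b\zeta_\ell,\nabla_b\psi\rangle\,dv_\theta\le\lambda_\ell\int_M\Phi_\ell^{Q^*}\zeta_\ell^{Q^*-1}\psi\,dv_\theta$ for all $\psi\ge0$;
\item the identity $(-\Delta_b+1)\Phi_\ell=0$ turns the Folland--Stein inequality into the weighted inequality $(\int_M|w|^{Q^*}\Phi_\ell^{Q^*}\,dv_\theta)^{2/Q^*}\le C\int_M\Phi_\ell^2|\nabla_bw|^2\,dv_\theta$;
\item a De Giorgi iteration then bounds $\zeta_\ell$ using only the integral smallness $\int_{M\setminus B_{R\mu_\ell}(q_\ell)}u_\ell^{Q^*}\,dv_\theta\le\varepsilon_0$ from Corollary~\ref{coronepoint}.
\end{itemize}
This argument does not see the degeneracy of $|\nabla_{\mathbb H}\rho_0|$, and your preliminary bound $d^{(Q-2)/2}u_\alpha\le C$ is not needed.

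The closing step has two further holes. First, for $n=1$ you arrive at $c\,\alpha\mu_\alpha^2\le C\mu_\alpha^2|\log\mu_\alpha|$, which is not a contradiction. You only know $\alpha\mu_\alpha^2\to0$, which does not rule out $\alpha\lesssim|\log\mu_\alpha|$. The bubble lower bound you invoke gives only $\alpha\int u_\alpha^2\ge c\,\alpha\mu_\alpha^2$; a logarithmic lower bound would need control of $u_\alpha$ from below out to a fixed radius, which fails beyond the scale $\alpha^{-1/2}$.

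Second, in Jerison--Lee coordinates $W_\alpha-Z_\alpha$ has an $O(\rho_0^3)T$ component. So comparing with $\int|\nabla_{\mathbb H}\widetilde w|^2$ produces the Reeb error $\int\rho^4|T_\theta w|^2$. Your plan never addresses this term, and pointwise decay of $u_\alpha$ alone does not control it.

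The paper handles both by never evaluating the errors through decay rates:
\begin{itemize}
\item testing the equation with $\rho^2\eta_\ell^2u_\ell$ gives $\int\rho^2|\nabla_bw_\ell|^2\le C\mu_\ell^2+C\int w_\ell^2$;
\item the estimate $\|T_\theta\varphi\|_{L^2}\le C(\|(-\Delta_b+\ell)\varphi\|_{L^2}+\|\varphi\|_{L^2})$, uniform in $\ell$ because $\int(-\Delta_b\varphi+\ell\varphi)^2\ge\int(\Delta_b\varphi)^2$, applied on dyadic annuli gives the same bound for $\int\rho^4|T_\theta w_\ell|^2$;
\item the resulting term $C\int w_\ell^2$ is absorbed into $\ell\int w_\ell^2$, leaving $c\,\ell\mu_\ell^2\le C\mu_\ell^2$ for every $n\ge1$.
\end{itemize}
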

	The proof is by contradiction and the argument is mainly motivated by Li--Ricciardi \cite{LR2003}. More precisely, assuming that the desired sharp inequality fails, we construct a sequence of minimizing functions whose Sobolev quotients are strictly below the Heisenberg sharp threshold. After that we perform a blow-up analysis, establish a local pointwise estimate by a Green-function comparison argument, and finally derive a contradiction from a Pohozaev-type inequality.
	
	The paper is organized as follows. In Section~\ref{sectionpreliminary}, we recall the preliminary material on pseudohermitian geometry, Folland--Stein spaces, and CR normal coordinates. In Section~\ref{sectionblowup}, we construct the minimizing sequence and establish the basic blow-up estimates. In Section~\ref{sectionpointwise}, following the Green-function comparison argument of Li--Ricciardi \cite{LR2003}, we prove the local pointwise upper bound near the blow-up point. Finally, in Section~\ref{sectionpohozaev}, adapting the Pohozaev-type argument of Aubin--Li~\cite{AL1999}, we obtain the final contradiction and complete the proof of the main theorem.
	
	\section{Preliminaries}\label{sectionpreliminary}
	We first recall some basic notation and facts concerning pseudohermitian structures on CR manifolds. Our main references are Folland--Stein \cite{FS1974}, Webster \cite{W1978}, and Jerison--Lee \cite{JL1989}. We shall follow the notation used in \cite{W1978}. Unless otherwise specified, lower-case Greek indices range from \(1\) to \(n\), and the Einstein summation convention is in force. The Levi form is written as \(h_{\alpha\bar\beta}\). Its inverse is denoted by \(h^{\bar\beta\alpha}\), so that
	\[
	h_{\alpha\bar\gamma}h^{\bar\gamma\beta}=\delta_\alpha{}^\beta, \qquad h^{\bar\beta\gamma}h_{\gamma\bar\alpha}=\delta^{\bar\beta}{}_{\bar\alpha}.
	\]
	We use \(h_{\alpha\bar\beta}\) and \(h^{\bar\beta\alpha}\) to lower and raise indices. Complex conjugation is reflected in the indices; for instance,
	\[
	\bar{A}_{\alpha\bar{\beta} r}=A_{\bar{\alpha}\beta\bar{r}}.
	\]
	The pseudohermitian connection induces covariant differentiation on functions and tensors, which will be denoted by placing indices after a comma.
	
	Let \(M\) be a \(C^\infty\) smooth manifold of dimension \(m\), and let \(n\) be an integer with \(1\le n\le [m/2]\). A CR structure on \(M\) of complex dimension \(n\) is an \(n\)-dimensional smooth complex subbundle
	\[
	T_{1,0}\subset \mathbb{C}TM,
	\]
	called the holomorphic tangent bundle, satisfying
	\[
	T_{1,0}\cap T_{0,1}=\{0\}, \qquad [T_{1,0},T_{1,0}]\subset T_{1,0},
	\]
	where \(T_{0,1}:=\overline{T_{1,0}}\). Let \(G\) denote the real part of \(T_{1,0}+T_{0,1}\); equivalently, \(G\) is a real \(2n\)-dimensional subbundle of \(TM\) such that
	\[
	\mathbb{C}G=T_{1,0}+T_{0,1}.
	\]
	Moreover, there is a unique complex structure map \(J:G\to G\) satisfying
	\[
	J(V+\bar V)=i(V-\bar V), \qquad V\in T_{1,0}.
	\]
	Throughout this paper, we assume that the CR manifold \(M\) is orientable and of hypersurface type; namely, \(M\) is a \((2n+1)\)-dimensional orientable smooth manifold endowed with an \(n\)-dimensional CR structure.
	
	Let \(E=G^\perp\) denote the real line bundle in \(T^*M\) annihilating \(G\). Since \(M\) is assumed to be orientable and \(G\) is oriented by its complex structure, \(E\) admits a global nonvanishing smooth section. Let \(\theta\) be such a section of \(E\). The Levi form associated with \(\theta\) is the Hermitian form on \(T_{1,0}\) defined by
	\[
	L_\theta(V,\bar W) = \langle -2i\,d\theta, V\wedge \bar W\rangle, \qquad V,W\in T_{1,0}.
	\]
	We shall always assume that \(M\) is nondegenerate at every point; that is, if \(Z\in T_{1,0}\) satisfies
	\[
	L_\theta(Z,\bar W)=0 \qquad \text{for all } W\in T_{1,0},
	\]
	then \(Z=0\). Under this assumption, \(\theta\) is a contact form. If, in addition, \(L_\theta\) is positive definite for some choice of \(\theta\), then \(M\) is said to be strictly pseudoconvex. The distribution \(G=\ker\theta\) specifies the horizontal directions
	of differentiation, while strict pseudoconvexity ensures that the Levi form defines a positive definite metric on \(G\), which determines both the horizontal energy and the principal part of the sub-Laplacian.
	
	A contact form \(\theta\) on \(M\) determines a unique vector field \(T_\theta\), called the Reeb vector field, satisfying
	\[
	\theta(T_\theta)=1, \qquad T_\theta\rfloor d\theta=0.
	\]
	Since \(\theta\) is a contact form, commutators of horizontal vector
	fields generate the missing Reeb direction; hence the horizontal system satisfies the step-two H\"ormander bracket-generating condition.
	Let \(\{W_\alpha\}\) be a local frame for \(T_{1,0}\). The corresponding admissible coframe is the collection of \((1,0)\)-forms \(\{\theta^\beta\}\) defined by
	\[
	\theta^\beta(W_\alpha)=\delta_\alpha^\beta, \qquad \theta^\beta(W_{\bar\alpha})=\theta^\beta(T_\theta)=0.
	\]
	Then \(\{T_\theta,W_\alpha,W_{\bar\alpha}\}\) forms a local frame of \(\mathbb C TM\), and \(\{\theta,\theta^\alpha,\theta^{\bar\alpha}\}\) is the dual coframe. With respect to this coframe, one has
	\[
	d\theta= i h_{\alpha\bar\beta} \theta^\alpha\wedge\theta^{\bar\beta}, \qquad h_{\alpha\bar\beta} = L_\theta(W_\alpha,W_{\bar\beta}).
	\]

	The vector fields \(W_\alpha\) and \(W_{\bar\alpha}\) give the horizontal directions of differentiation, while \(h^{\bar\beta\alpha}\) is used to contract horizontal indices.
	
	The choice of the contact form \(\theta\) induces a natural linear connection, called the Tanaka--Webster connection, which was independently introduced by N. Tanaka \cite{T1975} and S. Webster \cite{W1978}. Following Webster's notation, this connection is defined by
	\[
	\nabla W_\alpha = {\omega_\alpha}^{\beta}\otimes W_\beta, \qquad \nabla W_{\bar\alpha} = {\omega_{\bar\alpha}}^{\bar\beta}\otimes W_{\bar\beta}, \qquad \nabla T_\theta=0, \]
	where the one-forms \({\omega_\alpha}^{\beta}\) are uniquely determined by
	\[
	d\theta^\alpha = \theta^\beta\wedge {\omega_\beta}^{\alpha} + \theta\wedge \tau^\alpha, \qquad \tau_\alpha\wedge \theta^\alpha=0, \qquad \omega_{\alpha\bar\beta} + \omega_{\bar\beta\alpha} = d h_{\alpha\bar\beta}.
	\]
	Writing \(\tau_\alpha=A_{\alpha\gamma}\theta^\gamma\), the second condition is equivalent to the symmetry relation \(A_{\alpha\gamma}=A_{\gamma\alpha}\). 

	To express the connection in local coefficient form, set
\[
\Gamma_{\gamma\alpha}{}^\beta
=
\omega_\alpha{}^\beta(W_\gamma),
\qquad
\Gamma_{\bar\gamma\alpha}{}^\beta
=
\omega_\alpha{}^\beta(W_{\bar\gamma}),
\qquad
\Gamma_{0\alpha}{}^\beta
=
\omega_\alpha{}^\beta(T_\theta).
\]
Then
\[
\omega_\alpha{}^\beta
=
\Gamma_{\gamma\alpha}{}^\beta\theta^\gamma
+
\Gamma_{\bar\gamma\alpha}{}^\beta\theta^{\bar\gamma}
+
\Gamma_{0\alpha}{}^\beta\theta,
\]
or equivalently,
\[
\nabla_{W_\gamma}W_\alpha
=
\Gamma_{\gamma\alpha}{}^\beta W_\beta,
\qquad
\nabla_{W_{\bar\gamma}}W_\alpha
=
\Gamma_{\bar\gamma\alpha}{}^\beta W_\beta,
\qquad
\nabla_{T_\theta}W_\alpha
=
\Gamma_{0\alpha}{}^\beta W_\beta.
\]
The corresponding coefficients for \(W_{\bar\alpha}\) are obtained by complex conjugation. Thus these coefficients are not independent; they are the components of the uniquely determined Tanaka--Webster connection forms.
	
	The Tanaka--Webster connection induces covariant derivatives of functions and tensors. For a smooth function \(u\), we write
	\[
	u_\alpha=W_\alpha u,\qquad u_{\bar\alpha}=W_{\bar\alpha}u,\qquad u_0=T_\theta u.
	\]
	The Levi form gives a Hermitian metric on the horizontal bundle \(G=\ker\theta\). We denote by \(\nabla_b u\) the horizontal gradient of \(u\); in a local admissible frame,
	\[
	|\nabla_b u|^2 =h^{\alpha\bar\beta}u_\alpha u_{\bar\beta} +h^{\bar\alpha\beta}u_{\bar\alpha}u_\beta .
	\]
	The sub-Laplacian associated with \(\theta\) is the horizontal trace of the pseudohermitian Hessian. With the above connection forms,
	\[
	\Delta_b u =u_\alpha{}^\alpha+u_{\bar\alpha}{}^{\bar\alpha} =h^{\alpha\bar\beta}[ Z_{\bar\beta}Z_\alpha u+Z_\alpha Z_{\bar\beta}u -\Gamma_{\bar\beta\alpha}{}^\gamma W_\gamma u-\Gamma_{\alpha\bar\beta}{}^{\bar\gamma}
	W_{\bar\gamma}u ].
	\]
	Unless otherwise specified, all functions in this paper are real-valued. With our sign convention for \(\Delta_b\),
	\[
	\int_M \langle \nabla_bu,\nabla_bv\rangle_\theta\,dv_\theta = -\int_M v\,\Delta_bu\,dv_\theta, \qquad u,v\in C^\infty(M).
	\]
	In particular,
	\[
	\int_M |\nabla_bu|_\theta^2\,dv_\theta = -\int_M u\,\Delta_bu\,dv_\theta .
	\]
	The integration by parts formula stated above extends to \(S^{1,2}(M)\) in the usual weak sense.
	
	We next introduce the Heisenberg group, the flat model for strictly pseudoconvex pseudohermitian geometry. We identify \(\mathbb H^n\) with \(\mathbb C^n\times\mathbb R\), write its points as \(\xi=(z,t)\), and use the group law
	\[
	(z,t)\circ(z',t') =\bigl(z+z',\,t+t'+2\operatorname{Im}(z\cdot\overline{z'})\bigr), \qquad z\cdot\overline{z'}=\sum_{\alpha=1}^n z^\alpha\overline{z'^\alpha}.
	\]
	The standard left-invariant horizontal vector fields are
	\[
	Z_\alpha=\frac{\partial}{\partial z^\alpha} +i\bar z^\alpha\frac{\partial}{\partial t}, \qquad Z_{\bar\alpha}=\frac{\partial}{\partial\bar z^\alpha} -iz^\alpha\frac{\partial}{\partial t}, \qquad T=\frac{\partial}{\partial t}.
	\]
	They define the standard CR structure \(T_{1,0}(\mathbb H^n)=\operatorname{span}_{\mathbb C}\{Z_1,\ldots,Z_n\}\). The standard contact form is \(\Theta=dt+i z^\alpha d\bar z^\alpha-i\bar z^\alpha dz^\alpha\), so that \(d\Theta=2i\,dz^\alpha\wedge d\bar z^\alpha\). We denote by \(\nabla_{\mathbb H}\) and \(\Delta_{\mathbb H}\) the corresponding horizontal gradient and sub-Laplacian.
	
	The natural nonisotropic dilations on \(\mathbb H^n\) are \(\delta_\lambda(z,t)=(\lambda z,\lambda^2t)\), \(\lambda>0\). The homogeneous dimension is \(Q=2n+2\), and the Koranyi norm is
	\[
	\rho_0(\xi)=\rho_0(z,t)=\bigl(|z|^4+t^2\bigr)^{1/4}, \qquad \rho_0(\delta_\lambda\xi)=\lambda\rho_0(\xi).
	\]

	We now recall the pseudohermitian normal coordinates introduced by
Jerison and Lee in their study of the CR Yamabe problem
\cite{JL1989}.  In analogy with normal coordinates in the Riemannian
setting, these coordinates identify a neighborhood of a point of \(M\) with
a neighborhood of the origin in \(\mathbb H^n\).  We use only the
pseudohermitian normal coordinates associated with the fixed contact form
\(\theta\).  Their role in the present paper is to provide the coordinate
maps used in the blow-up analysis and the local comparison between the
pseudohermitian structure and the standard Heisenberg structure.

For a fixed point \((z,c)\in\mathbb H^n\), consider the curve
\(
\gamma_{z,c}(s):=\delta_s(z,c).
\)
With respect to the standard pseudohermitian connection on
\(\mathbb H^n\), it satisfies
\[
\nabla_{\dot\gamma_{z,c}}\dot\gamma_{z,c}=2cT.
\]
Motivated by this model equation, for \(q\in M\) we using the splitting
\(
T_qM=G_q\oplus\mathbb RT_\theta(q),
\)
for \(V\in G_q\) and \(c\in\mathbb R\), let \(\gamma_{V,c}\) denote the
solution of
\begin{equation}\label{eq:parabolic-geodesic}
\nabla_{\dot\gamma_{V,c}}\dot\gamma_{V,c}
=
2cT_\theta(\gamma_{V,c}),
\qquad
\gamma_{V,c}(0)=q,
\qquad
\dot\gamma_{V,c}(0)=V.
\end{equation}
The curve \(\gamma_{V,c}\) is called the parabolic geodesic determined by
\(V\) and \(c\).

As for any affine connection, the parabolic geodesic equation can be written as a system of ordinary differential equations in local coordinates. Let \(x=(x^1,\ldots,x^{2n+1})\) be local real coordinates, and write
\[
\nabla_{\partial_i}\partial_j
=
\Gamma_{ij}{}^k\partial_k,
\qquad
\partial_i=\frac{\partial}{\partial x^i},
\]
where \(1\leq i,j,k\leq 2n+1\) and \(\Gamma_{ij}{}^k\) are the coefficients of the Tanaka--Webster connection with respect to the coordinate frame. Writing
\[
T_\theta=T_\theta^k\partial_k,
\]
equation \eqref{eq:parabolic-geodesic} becomes
\begin{equation}\label{eq:parabolic-geodesic-local}
\ddot{\gamma}^k(s)
+
\Gamma_{ij}{}^k\bigl(\gamma(s)\bigr)
\dot{\gamma}^i(s)\dot{\gamma}^j(s)
=
2cT_\theta^k\bigl(\gamma(s)\bigr).
\end{equation}
Thus the only difference from the usual coordinate geodesic equation is the forcing term \(2cT_\theta\) in the Reeb direction.

The associated parabolic exponential map is defined by
\(
\Phi_q(V+cT_\theta(q))
:=
\gamma_{V,c}(1).
\)
By \cite[Theorem~2.1]{JL1989}, \(\Phi_q\) maps a neighborhood of
\(0\in T_qM\) diffeomorphically onto a neighborhood of \(q\).

Choose a frame \(\{e_\alpha\}_{\alpha=1}^n\) of \(T_{1,0}M|_q\) such that
\(
h_{\alpha\bar\beta}(q)=2\delta_{\alpha\bar\beta}.
\)
By \cite[Proposition~2.3]{JL1989}, this frame determines a smooth local
frame \(\{W_\alpha\}\) of \(T_{1,0}M\), called the special frame, satisfying
\[
W_\alpha(q)=e_\alpha,
\qquad
h_{\alpha\bar\beta}=2\delta_{\alpha\bar\beta}
\]
throughout the coordinate neighborhood.  We denote its dual admissible
coframe by \(\{\theta^\alpha\}\).

The coframe at \(q\) defines a real-linear identification
\(
\lambda_q:T_qM\longrightarrow\mathbb H^n
\)
by
\[
\lambda_q\bigl(V+cT_\theta(q)\bigr)=(z,t),
\qquad
z^\alpha=\theta_q^\alpha(V),
\qquad
t=c,
\]
where \(V\in G_q\).  The pseudohermitian normal-coordinate parametrization
centered at \(q\) is then
\(
\Psi_q:=\Phi_q\circ\lambda_q^{-1}.
\)

Since \(M\) and its normalized frame bundle are compact, after shrinking the
radius if necessary, there exists \(r_{JL}>0\), independent of \(q\) and of
the normalized initial frame, such that
\(
\Psi_q:B_{r_{JL}}(0)\longrightarrow M
\)
is a diffeomorphism onto its image.  For \(0<r<r_{JL}\), we write
\(
B_r(q):=\Psi_q(B_r(0)),
\)
and define the corresponding Koranyi-type coordinate distance by
\(
\rho_q(x):=\rho_0(\Psi_q^{-1}(x))
\) for \(x\in B_{r_{JL}}(q)\).

We finally record only the asymptotic estimates from the Jerison--Lee
Taylor expansions that will be used below.  After pulling all objects back
by \(\Psi_q\), we suppress the pullback notation.  For
\(1\le\alpha\le n\), one has
\begin{equation}\label{eq:JL-frame-comparison}
\begin{aligned}
W_\alpha
&=
Z_\alpha
+
\sum_{\beta=1}^n O(\rho_0^2)Z_\beta
+
\sum_{\beta=1}^n O(\rho_0^2)Z_{\bar\beta}
+
O(\rho_0^3)T .
\end{aligned}
\end{equation}
The conjugate expansion holds for \(W_{\bar\alpha}\).  After reducing
\(r_{JL}\) if necessary, the inverse change of frame satisfies the analogous
estimates.  Moreover,
\begin{equation}\label{eq:JL-volume-comparison}
\Psi_q^*dv_\theta
=
(1+O(\rho_0^2))\,dv_0 .
\end{equation}
Here every \(O(\rho_0^m)\) denotes a smooth coefficient function bounded in
absolute value by \(C\rho_0^m\), where \(C\) may be chosen independently of
\(q\) and of the normalized initial frame by the compactness of \(M\).

These estimates quantify how closely the pseudohermitian structure near
\(q\) is approximated by the standard Heisenberg structure.  The coordinate
map \(\Psi_q\) is used for the blow-up rescaling in Section~\ref{sectionblowup}, while
\(\rho_q\) serves as the local radial variable in the Green-function
comparison of Section~\ref{sectionpointwise}.  In Section~\ref{sectionpohozaev}, the frame and volume estimates above
are used to compare the localized Sobolev quotient on \(M\) with the sharp
Folland--Stein inequality on \(\mathbb H^n\).
	
	We now recall the Folland--Stein spaces, following \cite{FS1974,JL1987}. Let \(U\subset M\) be relatively compact and let \(\{W_1,\ldots,W_n\}\) be a local pseudohermitian frame on \(U\). Write \(X_j=\operatorname{Re}W_j\) and \(X_{j+n}=\operatorname{Im}W_j\), \(j=1,\ldots,n\). For a multi-index \(\alpha=(\alpha_1,\ldots,\alpha_m)\), with \(1\le\alpha_i\le2n\), set \(|\alpha|=m\) and \(X^\alpha=X_{\alpha_1}\cdots X_{\alpha_m}\). For \(1\le r\le\infty\) and \(k\ge0\), define
	\[
	\|u\|_{S^{k,r}(U)}=\sum_{|\alpha|\le k}\|X^\alpha u\|_{L^r(U)}.
	\]
	The space \(S^{k,r}(U)\) is the completion of \(C^\infty(U)\) with respect to this norm, equivalently the space of \(L^r\) functions whose horizontal derivatives \(X^\alpha u\), \(|\alpha|\le k\), belong to \(L^r\) in the distribution sense. On compact \(M\), \(S^{k,r}(M)\) is defined by a finite covering, a partition of unity, and the above local norms.
	
	We shall also use the Folland--Stein Lipschitz spaces. On \(\mathbb H^n\), for \(0<\beta<1\), \(\Gamma^\beta(\mathbb H^n)\) consists of bounded continuous functions \(u\) such that
	\[
	\|u\|_{\Gamma^\beta(\mathbb H^n)} =\|u\|_{L^\infty(\mathbb H^n)} +\sup_{\xi\in\mathbb H^n,\ \eta\ne0} \frac{|u(\xi\circ\eta)-u(\xi)|}{\rho_0(\eta)^\beta} <\infty.
	\]
	For \(\beta=1\), the first difference is replaced by the second difference
	\[
	\sup_{\xi\in\mathbb H^n,\ \eta\ne0} \frac{|u(\xi\circ\eta)+u(\xi\circ\eta^{-1})-2u(\xi)|}{\rho_0(\eta)}<\infty.
	\]
	Higher order spaces are defined by horizontal derivatives: if \(\beta=k+\beta'\), \(k\ge1\), \(0<\beta'\le1\), then \(\Gamma^\beta(\mathbb H^n)\) consists of those \(u\in L^\infty\cap C^0\) for which \(Du\in\Gamma^{\beta'}(\mathbb H^n)\) for every product \(D\) of at most \(k\) left-invariant horizontal vector fields. On a CR manifold, the local spaces \(\Gamma^\beta(U)\) are defined in normal coordinate neighborhoods by using the nonisotropic distance \(p(x,y)=\rho_0(\Psi_x^{-1}(y))\).
	
	The Folland--Stein Sobolev embedding gives, for \(Q=2n+2\) and
	\[
	Q^*=\frac{2Q}{Q-2}=2+\frac2n,
	\]
	the continuous embedding \(S^{1,2}(M)\hookrightarrow L^{Q^*}(M)\). In particular, there is a constant \(C\), depending only on \((M,\theta)\), such that
	\[
	\|u\|_{L^{Q^*}(M)}^2 \le C(\|\nabla_bu\|_{L^2(M)}^2+\|u\|_{L^2(M)}^2), \qquad u\in S^{1,2}(M).
	\]
	Moreover, \(S^{1,2}(M)\hookrightarrow L^p(M)\) compactly for every \(1\le p<Q^*\).
	
	Finally, we recall the sharp Folland--Stein inequality and its extremals on \(\mathbb H^n\). With respect to \(dv_0=\Theta\wedge(d\Theta)^n\), there is an optimal constant \(K(n)>0\) such that
	\[
	\Big(\int_{\mathbb H^n}|u|^{Q^*}\,dv_0\Big)^{2/Q^*} \le K(n)^2\int_{\mathbb H^n}|\nabla_{\mathbb H}u|^2\,dv_0.
	\]
	For the classification of all positive extremals, see Jerison--Lee \cite{JL1988} and Frank--Lieb \cite{FL2012}. We fix the normalized bubble
	\[
	v_0(z,t)= \Big(\frac{a_n^2}{(a_n+|z|^2)^2+t^2}\Big)^{\frac{Q-2}{4}}, \qquad a_n=\frac1\pi,
	\]
	so that
	\[
	v_0(0)=1,\qquad \nabla_{\mathbb H}v_0(0)=0,\qquad \int_{\mathbb H^n}v_0^{Q^*}\,dv_0=1.
	\]
	Every positive extremal normalized by \(\int_{\mathbb H^n}v^{Q^*}\,dv_0=1\) has the form
	\[
	v_{\lambda,\xi_0}(\xi) =\lambda^n v_0\bigl(\delta_\lambda(\xi_0^{-1}\circ\xi)\bigr), \qquad \lambda>0,\quad \xi_0\in\mathbb H^n.
	\]
	Equivalently, for \(\xi=(z,t)\) and \(\xi_0=(z_0,t_0)\),
	\[
	v_{\lambda,\xi_0}(z,t)= \Big( \frac{a_n^2\lambda^2} {(a_n+\lambda^2|z-z_0|^2)^2 +\lambda^4(t-t_0+2\operatorname{Im}(z\cdot\overline{z_0}))^2} \Big)^{\frac{Q-2}{4}}.
	\]
	Without the \(L^{Q^*}\)-normalization, one may also multiply these functions by a positive constant.

	\section{Some blow-up estimates}\label{sectionblowup}
	In this section, we provide some standard methods and estimates for our problem.
	
	Let \(M_1,\ldots,M_N\) be the finitely many connected components of \(M\). If Theorem~\ref{Mainthm} has been proved for connected compact CR manifolds, then for each \(M_j\) there exists a constant \(A_j>0\) such that the desired inequality holds on \(M_j\). Taking \(A=\max_{1\leq j\leq N}A_j\), the same inequality holds on \(M\). Therefore, in the sequel we assume without loss of generality that \(M\) is connected.
	
	For every \(\ell>0\), we define the functional \(I_\ell: S^{1, 2}(M) \rightarrow \mathbb{R}\) by
	
	\begin{equation}\label{defIalpha}
		I_\ell(u)=\frac{\|\nabla_b u\|_{L^2}^2+\ell\|u\|_{L^2}^2}{\|u\|_{L^{Q^*}}^2}.
	\end{equation}
	
	We argue by contradiction. Suppose that Theorem~\ref{Mainthm} fails. Equivalently, that for every \(\ell>0\), there exists some \(u \in S^{1, 2}(M)\) such that \(I_\ell(u)<K(n)^{-2}\). Indeed, if \(\lambda_{\ell_0}\ge K(n)^{-2}\) for some \(\ell_0>0\), then
	\[
	\|u\|_{L^{Q^*}(M)}^2 \le K(n)^2\|\nabla_bu\|_{L^2(M)}^2 + K(n)^2\ell_0\|u\|_{L^2(M)}^2,
	\]
	for all \(u\in S^{1,2}(M)\), which is exactly Theorem~\ref{Mainthm} with \(A=K(n)^2\ell_0\).
	
	\begin{proposition}
		For all \(\ell>0\), there exists a minimizer \(u_\ell \in S^{1, 2}(M)\) such that
		\begin{equation}\label{eqnmin}
			I_\ell(u_\ell)=\lambda_\ell:=\inf_{u\in S^{1,2}(M)\setminus\{0\}} I_\ell(u)<K(n)^{-2},\quad\|u_\ell\|_{L^{Q^*}(M)}=1.
		\end{equation}
		Moreover, \(u_\ell\) is smooth and positive on \(M\), and satisfies the Euler--Lagrange equation
		\begin{equation}\label{eqnEL}
			-\Delta_b u_\ell+\ell u_\ell=\lambda_\ell u_\ell^{Q^*-1}, \quad u_\ell > 0, \quad \text { on } M .
		\end{equation}
	\end{proposition}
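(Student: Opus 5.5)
The argument is the standard subcritical-approximation scheme for the Yamabe-type problem, carried out in the Folland--Stein setting; the only new input needed is the almost sharp inequality \eqref{equShi} of Shi, which supplies the compactness threshold.

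\emph{The bound \(\lambda_\ell<K(n)^{-2}\).} The contradiction hypothesis gives, for each \(\ell>0\), some \(u\) with \(I_\ell(u)<K(n)^{-2}\). Hence \(\lambda_\ell<K(n)^{-2}\) by definition of the infimum. Replacing \(u\) by \(|u|\) leaves \(I_\ell\) unchanged, because \(|\nabla_b|u||=|\nabla_bu|\) a.e. in \(S^{1,2}\). So we may restrict to minimizing sequences \(u_k\ge0\) normalized by \(\|u_k\|_{L^{Q^*}}=1\); these are bounded in \(S^{1,2}(M)\).

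\emph{Existence of a minimizer.} By the compact embedding \(S^{1,2}\hookrightarrow L^2\), pass to a subsequence with \(u_k\rightharpoonup u\) weakly in \(S^{1,2}\), \(u_k\to u\) in \(L^2\), and a.e. Set \(w_k=u_k-u\). Brezis--Lieb gives
\[
1=\|u\|_{Q^*}^{Q^*}+\|w_k\|_{Q^*}^{Q^*}+o(1),
\]
and weak convergence gives
\[
\|\nabla_bu_k\|_2^2=\|\nabla_bu\|_2^2+\|\nabla_bw_k\|_2^2+o(1).
\]
Apply \eqref{equShi} to \(w_k\) with \(\varepsilon\) so small that \((K(n)^2+\varepsilon)\lambda_\ell<1\); since \(w_k\to0\) in \(L^2\), the term \(A_\varepsilon\|w_k\|_2^2\) vanishes in the limit. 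Writing \(a=\|u\|_{Q^*}^{Q^*}\in[0,1]\), this yields
\[
\lambda_\ell+o(1)=I_\ell(u_k)\ \ge\ \lambda_\ell a^{2/Q^*}+(K(n)^2+\varepsilon)^{-1}(1-a)^{2/Q^*}+o(1).
\]
Since \(2/Q^*<1\), we have \(a^{2/Q^*}+(1-a)^{2/Q^*}\ge1\), with equality only if \(a\in\{0,1\}\). Because \((K(n)^2+\varepsilon)^{-1}>\lambda_\ell\), the inequality forces \(a=1\). Therefore \(u_k\to u\) strongly in \(L^{Q^*}\), \(u\) is admissible, and weak lower semicontinuity gives \(I_\ell(u)=\lambda_\ell\). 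This concentration step is where the strict inequality \(\lambda_\ell<K(n)^{-2}\) and Shi's result are essential.

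\emph{Euler--Lagrange equation.} Set \(u_\ell=u\ge0\) with \(\|u_\ell\|_{Q^*}=1\). Differentiating \(I_\ell\) gives \eqref{eqnEL} in the weak sense.

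\emph{Regularity and positivity.} This is the main technical step. It follows Jerison--Lee \cite{JL1987}. A Moser iteration (a Brezis--Kato type argument), using the Folland--Stein embedding and test functions \(u\min(u,L)^{2s}\), shows \(u_\ell\in L^p\) for all \(p<\infty\). The Folland--Stein subelliptic estimates for \(\Delta_b\) in the spaces \(S^{k,p}\) and \(\Gamma^\beta\) then give \(u_\ell\in\Gamma^\beta\). Because the nonlinearity \(u^{Q^*-1}\) is only Hölder at zero, we bootstrap \(\Gamma^{\beta}\to\Gamma^{\beta+2}\) until \(u_\ell\in C^{1,\beta}\)-type regularity is reached.

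Positivity comes from the strong maximum principle for the Hörmander operator \(-\Delta_b+\ell\) applied to the nonnegative supersolution \(u_\ell\) (Bony's maximum principle), together with connectedness of \(M\) and \(u_\ell\not\equiv0\); this gives \(u_\ell>0\). Once \(u_\ell>0\), the nonlinearity is smooth along \(u_\ell\), and further bootstrapping gives \(u_\ell\in C^\infty\).
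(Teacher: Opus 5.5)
Your proof is correct, but you obtain the minimizer by a different mechanism than the paper.

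The paper follows the proof of Theorem~6.5 in \cite{JL1987}:
\begin{enumerate}
\item It minimizes the subcritical quotients $\lambda_{\ell,q}$, $2<q<Q^*$, where compactness of $S^{1,2}(M)\hookrightarrow L^q(M)$ gives minimizers for free.
\item It observes that $\lambda_{\ell,q}<K(n)^{-2}$ for $q$ near $Q^*$.
\item It imports Jerison--Lee's Moser iteration to get an $L^\infty$ bound on $u_{\ell,q}$ that is uniform in $q$.
\item It lets $q\to Q^*$; the uniform bound and dominated convergence prevent any loss of $L^{Q^*}$ mass.
\end{enumerate}
You instead minimize directly at the critical exponent. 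You rule out loss of compactness by the Brezis--Lieb splitting $u_k=u+w_k$, applying Shi's inequality \eqref{equShi} to the remainder and using strict subadditivity of $t\mapsto t^{2/Q^*}$.

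What each route buys:
\begin{itemize}
\item Your route makes the role of $\lambda_\ell<K(n)^{-2}$ completely explicit. Its only nonstandard input is one the paper already uses in Proposition~\ref{propblowup}, rather than the internals of Jerison--Lee's iteration.
\item The price is that you get no a priori $L^\infty$ bound. You must pay for regularity separately with the Brezis--Kato step, using $u_\ell^{Q^*-2}\in L^{Q/2}$ and only the non-sharp Folland--Stein inequality. In the paper, the uniform bound does this work.
\end{itemize}

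Your ordering (finite regularity, then positivity, then $C^\infty$) is more careful than the paper's. The paper asserts smoothness before positivity, even though $t\mapsto t^{1+2/n}$ is not smooth at $0$ when $n\ge3$.

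One small precision concerns the positivity step. Bony's maximum principle is classically formulated for $C^2$ functions, and ``$C^{1,\beta}$-type'' regularity does not give that. A cleaner way to get $u_\ell>0$ is to write $u_\ell=\lambda_\ell(-\Delta_b+\ell)^{-1}u_\ell^{Q^*-1}$. Then use that the kernel $\int_0^\infty e^{-\ell\sigma}H(\sigma;x,y)\,d\sigma$ is strictly positive off the diagonal on connected $M$, as in Lemma~\ref{lemgreen}. Since $u_\ell\ge0$ and $u_\ell\not\equiv0$, this gives $u_\ell(x)>0$ everywhere.
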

	\begin{proof}
		We follow the proof of Theorem~6.5 in \cite{JL1987}. For \(2<q<Q^*\), define
		\begin{equation}\label{infsubcritical}
			\lambda_{\ell,q}:=\inf\{\|\nabla_b u\|_{L^2(M)}^2+\ell\|u\|_{L^2(M)}^2: u\in S^{1,2}(M),\ \|u\|_{L^q(M)}=1\}.
		\end{equation}
		We first show that \(\lambda_{\ell,q}\) is attained by a positive function \(u_{\ell,q}\in S^{1,2}(M)\). Let \(\{\phi_k\}\) be a minimizing sequence for \(\lambda_{\ell,q}\). Replacing \(\phi_k\) by \(|\phi_k|\), we may assume that \(\phi_k\geq0\). Since \(\{\phi_k\}\) is bounded in \(S^{1,2}(M)\), after passing to a subsequence, there exists a nonnegative function \(u_{\ell,q} \in S^{1, 2}(M)\) such that \(\phi_k \rightharpoonup u_{\ell,q}\) weakly in \(S^{1, 2}(M)\) and \(\phi_k \to u_{\ell,q}\) strongly in \(L^q(M)\). In particular, \(\|u_{\ell,q}\|_{L^q}=1\).  By weak lower semicontinuity,
		\[
		\lambda_{\ell,q} \le\|\nabla_b u_{\ell,q}\|_{L^2(M)}^2+\ell\|u_{\ell,q}\|_{L^2(M)}^2\le\liminf_{k\to\infty}(\|\nabla_b\phi_k\|_{L^2(M)}^2+\ell\|\phi_k\|_{L^2(M)}^2)=\lambda_{\ell,q}.
		\] 
		Hence \(u_{\ell,q}\) attains \(\lambda_{\ell,q}\). A standard variational argument gives
		\begin{equation}\label{equsubcritical}
			-\Delta_b u_{\ell,q}+\ell u_{\ell,q}=\lambda_{\ell,q} u_{\ell,q}^{q-1}, \quad \text { on } M.
		\end{equation}
		Since \(\|u\|_{L^q(M)}\to\|u\|_{L^{Q^*}(M)}\) as \(q\to Q^*\) for every fixed \(u\in L^{Q^*}(M)\), and since \(\lambda_\ell<K(n)^{-2}\), there exists \(q_0<Q^*\) such that \(\lambda_{\ell,q} < K(n)^{-2}\) for \(q_0 < q < Q^*\). The Moser iteration in the proof of Jerison--Lee \cite[Theorem 6.5]{JL1987} then gives an \(L^\infty\)-bound independent of \(q\), that is, there exists a constant \(C_\ell>0\), independent of \(q\), such that \(\|u_{\ell,q}\|_{L^\infty(M)} < C_\ell\)  for \(q_0 < q < Q^*\). Let \(q\to Q^*\). After passing to a subsequence,  we may assume that \(u_{\ell,q} \rightharpoonup u_\ell\) weakly in \(S^{1, 2}(M)\) and \(u_{\ell,q} \to u_\ell\) strongly in \(L^2(M)\). Moreover, \(u_{\ell,q}\to u_\ell\) almost everywhere on \(M\). Since \(\{u_{\ell,q}\}\) is uniformly bounded in \(L^\infty(M)\), the dominated convergence theorem gives \(\|u_\ell\|_{L^{Q^*}}=1\). By weak lower semicontinuity and the definition of \(\lambda_\ell\), we have
		\[
		\lambda_\ell \le I_\ell(u_\ell) \le \liminf_{q\to Q^*}\lambda_{\ell,q}.
		\]
		On the other hand, using fixed test functions and letting \(q\to Q^*\) gives
		\[
		\limsup_{q\to Q^*}\lambda_{\ell,q} \leq \lambda_\ell.
		\]
		Hence \(I_\ell(u_\ell)=\lambda_\ell\), and \(u_\ell\) is a minimizer satisfying the Euler--Lagrange equation \eqref{eqnEL}.
		
		Finally, by subelliptic regularity, \(u_\ell\) is smooth. By Bony's maximum principle \cite{B1969} and connectedness, if \(u_\ell\) vanished at some point, then \(u_\ell\equiv0\) on \(M\), contradicts \(\|u_\ell\|_{L^{Q^*}}=1\). Therefore \(u_\ell>0\) on \(M\). This proves the proposition.
	\end{proof}
	
	\begin{proposition}\label{propblowup}
		As \(\ell\to\infty\), the sequence \(\{u_\ell\}\) satisfies the following blow-up estimates:
		\begin{itemize}
			\item[\textup{(1)}] \(u_\ell\rightharpoonup0\) weakly in \(S^{1,2}(M)\) and \(u_\ell\to0\) strongly in \(L^p(M)\) for every \(1\leq p<Q^*\). In particular, after passing to a subsequence, \(u_\ell\to0\) almost everywhere on \(M\);
			
			\item[\textup{(2)}]\(\|\nabla_bu_\ell\|_{L^2(M)}^2\to K(n)^{-2}\), \(\ell\|u_\ell\|_{L^2(M)}^2 \to 0\), \(\lambda_\ell\to K(n)^{-2}\);
			
			\item[\textup{(3)}] \(\max_M u_\ell\to+\infty\).
		\end{itemize}
	\end{proposition}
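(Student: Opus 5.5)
The plan is to combine the two bounds \(\lambda_\ell<K(n)^{-2}\) and \(\|u_\ell\|_{L^{Q^*}}=1\) with the almost sharp inequality \eqref{equShi} of Shi. Since \(I_\ell(u_\ell)=\lambda_\ell<K(n)^{-2}\), we have
\[
\|\nabla_bu_\ell\|_{L^2}^2+\ell\|u_\ell\|_{L^2}^2=\lambda_\ell<K(n)^{-2}.
\]
This gives a uniform bound on \(\|\nabla_b u_\ell\|_{L^2}\) and the estimate \(\|u_\ell\|_{L^2}^2\le K(n)^{-2}/\ell\to0\). Hence \(\{u_\ell\}\) is bounded in \(S^{1,2}(M)\).

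For part (1), take any weakly convergent subsequence in \(S^{1,2}(M)\). Its limit must be \(0\), because \(u_\ell\to0\) in \(L^2\). So the whole family converges weakly to \(0\). By the compact embedding \(S^{1,2}(M)\hookrightarrow L^p(M)\) for \(p<Q^*\), we get \(u_\ell\to0\) strongly in \(L^p\); for \(p\le 2\) this also follows from Hölder's inequality on the compact manifold \(M\). Almost everywhere convergence along a subsequence then follows from the \(L^2\) convergence.

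For part (2), fix \(\varepsilon>0\) and apply \eqref{equShi} to \(u_\ell\):
\[
1=\|u_\ell\|_{L^{Q^*}}^2\le (K(n)^2+\varepsilon)\|\nabla_bu_\ell\|_{L^2}^2+A_\varepsilon\|u_\ell\|_{L^2}^2 .
\]
The last term tends to zero as \(\ell\to\infty\), so \(\liminf_{\ell}\|\nabla_bu_\ell\|_{L^2}^2\ge (K(n)^2+\varepsilon)^{-1}\). Letting \(\varepsilon\to0\) gives \(\liminf_\ell\|\nabla_b u_\ell\|^2_{L^2}\ge K(n)^{-2}\). Combined with
\[
\|\nabla_bu_\ell\|_{L^2}^2\le \|\nabla_bu_\ell\|_{L^2}^2+\ell\|u_\ell\|_{L^2}^2=\lambda_\ell<K(n)^{-2},
\]
we conclude that \(\|\nabla_bu_\ell\|_{L^2}^2\to K(n)^{-2}\) and \(\lambda_\ell\to K(n)^{-2}\). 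The difference of these two quantities is \(\ell\|u_\ell\|_{L^2}^2\), so it tends to \(0\). This is the key step. The only nontrivial input is Shi's \(\varepsilon\)-inequality, and it is what prevents energy from being lost in the limit.

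For part (3), argue by contradiction. Suppose that along a subsequence \(\max_M u_\ell\le C\). Then
\[
1=\int_M u_\ell^{Q^*}\le C^{Q^*-2}\int_M u_\ell^2\to0,
\]
which is impossible. So \(\max_Mu_\ell\to+\infty\) along the full family \(\ell\to\infty\). There is no real obstacle in any of these steps; the only care needed is to apply \eqref{equShi} with \(\varepsilon\) fixed first and only then let \(\ell\to\infty\).
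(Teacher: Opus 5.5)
Your proposal is correct and follows essentially the same route as the paper. You use the energy identity \(\|\nabla_bu_\ell\|_{L^2}^2+\ell\|u_\ell\|_{L^2}^2=\lambda_\ell<K(n)^{-2}\) with compactness for (1), Shi's inequality \eqref{equShi} with \(\varepsilon\) fixed before letting \(\ell\to\infty\) for (2), and \(1\le(\max_M u_\ell)^{Q^*-2}\|u_\ell\|_{L^2}^2\) for (3); the only differences are that you work with \(\liminf\) instead of extracting a convergent subsequence of \(\|\nabla_bu_\ell\|_{L^2}^2\), and that in (3) you correctly negate the claim as boundedness along a subsequence.
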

	
	\begin{proof}
		By the normalization \(\|u_\ell\|_{L^{Q^*}(M)}=1\) and the definition of \(\lambda_\ell\), we have
		\[
		\|\nabla_bu_\ell\|_{L^2(M)}^2 + \ell\|u_\ell\|_{L^2(M)}^2 = I_\ell(u_\ell) = \lambda_\ell < K(n)^{-2}.
		\]
		It follows that
		\[
		\|u_\ell\|_{L^2(M)}^2 \leq \frac{K(n)^{-2}}{\ell} \to 0,
		\]
		and that \(\{u_\ell\}\) is bounded in \(S^{1,2}(M)\). Since the embedding \(S^{1,2}(M)\hookrightarrow L^p(M)\) is compact for every \(1\leq p<Q^*\), every subsequence of \(\{u_\ell\}\) has a further subsequence, still denoted by \(\{u_\ell\}\), such that
		\[
		u_\ell\rightharpoonup u \quad\text{weakly in }S^{1,2}(M), \qquad u_\ell\to u \quad\text{strongly in }L^p(M),\quad 1\leq p<Q^*.
		\]
		Since \(\|u_\ell\|_{L^2(M)}\to0\), we have \(u\equiv0\). Since the subsequence was arbitrary, we have
		\[
		u_\ell\rightharpoonup0 \quad\text{in }S^{1,2}(M), \qquad u_\ell\to0 \quad\text{in }L^p(M),\quad 1\leq p<Q^*.
		\]
		The almost everywhere convergence follows after passing to a subsequence. This proves \textup{(1)}.
		
		For \textup{(2)},  since \(\{\|\nabla_bu_\ell\|_{L^2(M)}^2\}\) is bounded, we may assume that
		\[
		\|\nabla_bu_\ell\|_{L^2(M)}^2\to\theta
		\]
		passing to a subsequence. By \eqref{equShi}, using \(\|u_\ell\|_{L^{Q^*}(M)}=1\) and \(\|u_\ell\|_{L^2(M)}\to 0\), and then letting \(\ell \to \infty\), we obtain
		\[
		1 \leq \bigl(K(n)^2+\varepsilon\bigr)\theta .
		\]
		Since \(\varepsilon>0\) is arbitrary, \(\theta\geq K(n)^{-2}\). On the other hand, since
		\[
		\|\nabla_bu_\ell\|_{L^2(M)}^2 \leq \lambda_\ell < K(n)^{-2},
		\]
		we have \(\theta \leq K(n)^{-2}\). Hence \(\theta=K(n)^{-2}\). Because the convergent subsequence was arbitrary, we have
		\[
		\|\nabla_bu_\ell\|_{L^2(M)}^2\to K(n)^{-2}.
		\]
		Using the fact that \(\lambda_\ell< K(n)^{-2}\) and \(\ell\|u_\ell\|_{L^2(M)}^2=\lambda_\ell-\|\nabla_bu_\ell\|_{L^2(M)}^2\) we have
		\[
		\lambda_\ell\to K(n)^{-2} \quad\text{and}\quad \ell\|u_\ell\|_{L^2(M)}^2\to0
		\]
		as \(\ell \to \infty\). This proves \textup{(2)}.
		
		It remains to prove \textup{(3)}. Suppose, by contradiction, that \(\max_Mu_\ell\) is bounded. Then there is a constant \(C>0\) such that \(u_\ell\leq C\) on \(M\). Hence
		\[
		1 = \int u_\ell^{Q^*}\,dv_\theta \leq \bigl(\max_Mu_\ell\bigr)^{Q^*-2} \int u_\ell^2\,dv_\theta \leq C^{Q^*-2}\|u_\ell\|_{L^2(M)}^2 \to0,
		\]
		which is impossible. Therefore \(\max_Mu_\ell\to+\infty\), and the proof is complete.
	\end{proof}
	
	Let \(q_\ell\in M\) be a maximum point of \(u_\ell\), namely, \(u_\ell(q_\ell)=\max_Mu_\ell\). Define \( \mu_\ell=u_\ell(q_\ell)^{-\frac{2}{Q-2}} . \) By Proposition~\ref{propblowup}, \(\mu_\ell\to0\) as \(\ell \to +\infty\). In pseudohermitian normal coordinates centered at \(q_\ell\), define
	\[
	v_\ell(\xi) = \mu_\ell^{\frac{Q-2}{2}} u_\ell\bigl(\Psi_\ell(\delta_{\mu_\ell}\xi)\bigr), \qquad \xi\in\Omega_\ell:=B_{r_{JL}\mu_\ell^{-1}}(0)\subset\mathbb H^n .
	\]
	Let \(\theta_\ell\) be the rescaled contact form. We write \(\nabla_{b,\ell}\) and \(\Delta_{b,\ell}\) for the corresponding horizontal gradient and sublaplacian. On the standard Heisenberg group we write \(\nabla_{\mathbb H}\), \(\Delta_{\mathbb H}\), and \(d\xi=dv_{\theta_0}\). Then
	\begin{equation}\label{eqnvell}
		-\Delta_{b,\ell}v_\ell+\ell\mu_\ell^2v_\ell = \lambda_\ell v_\ell^{Q^*-1}, \qquad v_\ell(0)=1,\qquad 0\le v_\ell\le1 \quad\hbox{in }\Omega_\ell .
	\end{equation}
	
	\begin{proposition}\label{propvstrong}
		Let \(v_0\) be the Jerison--Lee bubble normalized by \(v_0(0)=1\). Then
		\begin{equation}\label{equvstrong}
			\int_{\Omega_\ell} ( |\nabla_{b,\ell}(v_\ell-v_0)|^2 + |v_\ell-v_0|^{Q^*} ) \,dv_{\theta_\ell} \longrightarrow0 .
		\end{equation}
		In particular, \(v_\ell\to v_0\) in \(L^{Q^*}(\Omega_\ell)\) and in energy.
	\end{proposition}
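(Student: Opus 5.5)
We prove Proposition~\ref{propvstrong} in three steps: local compactness, identification of the limit, and upgrading to strong convergence by comparing energies.

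\emph{Step 1: local convergence.} By \eqref{eq:JL-frame-comparison} and the dilation \(\delta_{\mu_\ell}\), the rescaled frame satisfies \(W_{\alpha}^{(\ell)}=Z_\alpha+O(\mu_\ell^2\rho_0^2)Z+O(\mu_\ell^3\rho_0^3)T\). Hence on every fixed ball \(B_R(0)\) the operator \(\Delta_{b,\ell}\) converges smoothly to \(\Delta_{\mathbb H}\), and \(dv_{\theta_\ell}=(1+O(\mu_\ell^2\rho_0^2))\,d\xi\). By \eqref{eqnvell}, \(0\le v_\ell\le1\).

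The potential term has coefficient \(c_\ell:=\ell\mu_\ell^2\). We claim \(c_\ell\) is bounded. It is nonnegative. Suppose \(c_\ell\to\infty\) along a subsequence. At the maximum point \(0\) we have \(-\Delta_{b,\ell}v_\ell(0)\ge0\), so \eqref{eqnvell} gives \(c_\ell\le\lambda_\ell<K(n)^{-2}\). Thus \(c_\ell\) is bounded, and we pass to a subsequence with \(c_\ell\to c_\infty\ge0\).

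Folland--Stein interior \(\Gamma^{\beta}\) and \(S^{k,p}\) estimates for \(\Delta_{b,\ell}\) are uniform in \(\ell\), since the coefficients converge. With Arzel\`a--Ascoli and a diagonal argument, this gives \(v_\ell\to v\) in \(C^{2}_{loc}\) (in the Folland--Stein sense). The limit satisfies
\[
-\Delta_{\mathbb H}v+c_\infty v=K(n)^{-2}v^{Q^*-1},\qquad v(0)=1=\max v .
\]

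\emph{Step 2: identify the limit.} By scaling invariance of the energies and Fatou,
\[
\int_{\mathbb H^n}|\nabla_{\mathbb H}v|^2\le\liminf\|\nabla_bu_\ell\|_{L^2}^2=K(n)^{-2},\qquad
\int v^{Q^*}\le1,\qquad c_\infty\int v^2\le\liminf \ell\|u_\ell\|_{L^2}^2=0 .
\]
Here we used Proposition~\ref{propblowup}(2). Since \(v(0)=1\), we get \(c_\infty=0\).

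Testing the limit equation with \(v\) (justified by a cutoff, since \(v\in D^{1,2}\)) gives \(\int|\nabla_{\mathbb H}v|^2=K(n)^{-2}\int v^{Q^*}\). Combining this with the sharp inequality \(\|v\|_{Q^*}^2\le K(n)^2\int|\nabla_{\mathbb H}v|^2\) yields \(\int v^{Q^*}\ge1\). Hence \(\int v^{Q^*}=1\) and \(\int|\nabla_{\mathbb H}v|^2=K(n)^{-2}\), so \(v\) is a normalized extremal. By the Jerison--Lee classification, \(v=v_{\lambda,\xi_0}\). The maximum sits at \(0\) with value \(1\), and the maximum of \(v_{\lambda,\xi_0}\) is \(\lambda^n\), attained at \(\xi_0\). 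Therefore \(\lambda=1\), \(\xi_0=0\), and \(v=v_0\). Since the limit is unique, the full sequence converges.

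\emph{Step 3: strong convergence on \(\Omega_\ell\).} This is the main obstacle, because \(\Omega_\ell\) exhausts \(\mathbb H^n\) and we must rule out mass escaping to infinity. The total quantities converge to the right values:
\[
\int_{\Omega_\ell}v_\ell^{Q^*}dv_{\theta_\ell}\le1=\int v_0^{Q^*},\qquad
\int_{\Omega_\ell}|\nabla_{b,\ell}v_\ell|^2\le\|\nabla_b u_\ell\|^2\to K(n)^{-2}=\int|\nabla_{\mathbb H}v_0|^2 .
\]
Fix \(\varepsilon>0\) and choose \(R\) such that the integrals of \(v_0\) outside \(B_R\) are below \(\varepsilon\). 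Local \(C^1\) convergence on \(B_R\) makes the integrals of \(v_\ell\) over \(B_R\) \(\varepsilon\)-close to those of \(v_0\). It follows that \(\int_{\Omega_\ell\setminus B_R}(|\nabla_{b,\ell}v_\ell|^2+v_\ell^{Q^*})\le C\varepsilon\) for large \(\ell\).

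(For \(v_\ell^{Q^*}\), use that \(\int_M u_\ell^{Q^*}=1\) splits as the part in the coordinate ball plus the part outside, and that the total over \(M\) equals \(1\).) The only subtlety is the energy of \(u_\ell\) outside \(\Psi_\ell(B_{r_{JL}})\): its \(Q^*\)-mass is at most \(C\varepsilon\) by this bookkeeping, and it is excluded from \(\Omega_\ell\) in any case.

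Finally, split \(\Omega_\ell=B_R\cup(\Omega_\ell\setminus B_R)\). On \(B_R\), the integral of \(|\nabla_{b,\ell}(v_\ell-v_0)|^2+|v_\ell-v_0|^{Q^*}\) tends to \(0\) by Step 1. On the complement, by the triangle inequality it is bounded by \(C\) times the tails of \(v_\ell\) and \(v_0\), which is \(O(\varepsilon)\). On the complement we must also handle the term \(\nabla_{b,\ell}v_0\) versus \(\nabla_{\mathbb H}v_0\). These are comparable since \(|\nabla_{b,\ell}v_0|\le C|\nabla_{\mathbb H}v_0|+C|Tv_0|\mu_\ell^3\rho_0^3\), and the \(v_0\)-tails decay fast enough to make the extra term small. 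Letting \(\ell\to\infty\) and then \(\varepsilon\to0\) proves \eqref{equvstrong}.
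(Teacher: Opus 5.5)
Your proof is correct and follows the same route as the paper: local Folland--Stein compactness of \(v_\ell\), identification of the limit as \(v_0\) through the sharp Heisenberg inequality and the Jerison--Lee classification, and then tail control on the exhausting domains \(\Omega_\ell\) using \(\limsup\int_{\Omega_\ell}v_\ell^{Q^*}\le1\) and \(\limsup\int_{\Omega_\ell}|\nabla_{b,\ell}v_\ell|^2\le K(n)^{-2}\). The differences are only cosmetic, and your frame expansion has a harmless slip:
\begin{itemize}
\item You get \(\ell\mu_\ell^2\le\lambda_\ell\) at the maximum point and then \(c_\infty=0\) from \(\ell\|u_\ell\|_{L^2}^2\to0\); the paper shows \(\ell\mu_\ell^2\le\ell\|u_\ell\|_{L^2}^2\to0\) directly.
\item You use crude tail bounds where the paper uses a Brezis--Lieb-type inequality.
\item After rescaling, the \(T\)-term in your expansion should be \(O(\mu_\ell^2\rho_0^3)T\), not \(O(\mu_\ell^3\rho_0^3)T\). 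This does not affect the argument, because \(\mu_\ell\rho_0\le r_{JL}\) on \(\Omega_\ell\).
\end{itemize}
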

	
	\begin{proof}
		Set \(\varepsilon_\ell =  \ell \mu_\ell^2\), by Proposition~\ref{propblowup}, we have
		\[
		\varepsilon_\ell = \frac{\ell \| u_\ell \|_{L^2(M)}^2}{(\max_M u_\ell)^{Q^*-2}\| u_\ell \|_{L^2(M)}^2} \le \frac{\ell \| u_\ell \|_{L^2(M)}^2}{\| u_\ell \|_{L^{Q^*}(M)}^2} \rightarrow 0, \quad \text{as } \ell \to \infty.
		\]
		By a change of variables,
		\[
		\int_{\Omega_\ell}v_\ell^{Q^*}\,dv_{\theta_\ell}= \int_{B_{r_{JL}}(q_\ell)}u_\ell^{Q^*}\,dv_\theta \quad \text{and}\quad \int_{\Omega_\ell}|\nabla_{b,\ell}v_\ell| ^2\,dv_{\theta_\ell}=\int_{B_{r_JL}(q_\ell)}|\nabla_bu_\ell|^2\,dv_\theta.
		\]
		Consequently, by the definition of \(u_\ell\) and Proposition~\ref{propblowup}, we have
		\begin{equation}\label{equlimsupvl}
			\limsup_{\ell\to\infty}\int_{\Omega_\ell}v_\ell^{Q^*}\,dv_{\theta_\ell} \le 1, \quad \text{and}\quad \limsup_{\ell\to\infty}\int_{\Omega_\ell}|\nabla_{b,\ell}v_\ell|^2\,dv_{\theta_\ell} \le K(n)^{-2}.
		\end{equation}
		Recall that \(0 \le v_\ell \le v_\ell(0) =1\), the local subelliptic estimates of Folland--Stein \cite{FS1974} imply that, after passing to a subsequence, \(v_\ell \rightarrow v\) for some \(v\) in \(\Gamma^{1, \beta}_{loc}(\Ha)\) where \(0 < \beta <1\). Moreover, \(v\) satisfies
		\[
		\int_{\Ha} v^{Q^*}\,d \xi = \lim_{R \to +\infty}  \int_{B_R} v^{Q^*}\,d \xi = \lim_{R \to +\infty} \lim_{\ell \to +\infty} \int_{B_R} v_\ell^{Q^*}\,d V_{\theta_\ell} \le 1,
		\]
		and
		\[
		\int_{\Ha}|\nabla_{\mathbb{H}} v|^2\,d \xi = \lim_{R \to +\infty} \int_{B_R}|\nabla_{\mathbb{H}} v|^2\,d \xi= \lim_{R \to +\infty} \lim_{\ell \to +\infty} \int_{B_R}|\nabla_{b, \ell} v_\ell|^2\,d V_{\theta_\ell} \le K(n)^{-2}.
		\]
		In particular, \(v \in \mathcal{S}^{1, 2}(\Ha)\) and \(v\) is a weak solution of the equation
		\[
		-\Delta_b v = K(n)^{-2} v^{Q^*-1}, \quad 0 \le v \le 1, \quad \text{in } \Ha,
		\]
		and \(v(0) =1\) is a maximum point of \(v\). Multiplying the equation by \(v\) and integrating by parts, we have
		\[
		K(n)^{-2} \int_{\Ha} v^{Q^*}\,d \xi = \int_{\Ha}|\nabla_b v|^2\,d \xi \ge K(n)^{-2} (\int_{\Ha} v^{Q^*}\,d \xi)^{2/Q^*}.
		\]
		Therefore, \(\int_{\Ha} v^{Q^*}\,d \xi = 1\) and \(\int_{\Ha}|\nabla_b v|^2\,d \xi = K(n)^{-2}\), which shows that \(v\) is an extremal function for the sharp Sobolev inequality on \(\mathbb{H}^n\). By the classification of extremals \cite{JL1988,FL2012}, together with the normalization \(\int_{\mathbb H^n}v^{Q^*}\,dv_0=1\) and \(v(0)=\max_{\mathbb H^n}v=1\), we obtain \(v=v_0\). Since every subsequential local limit is the same, the local convergence holds for the whole sequence.
		
		We now prove the \(L^{Q^*}\)-convergence. For \(p=Q^*\), the elementary inequality
		\[
		|a+b|^p-|a|^p-|b|^p \le C_p(|a|^{p-1}|b|+|a|\,|b|^{p-1}), \quad a,b\in\mathbb R,
		\]
		applied with \(a=v_0\) and \(b=v_\ell-v_0\), gives
		\[
		\begin{aligned}
			\int_{\Omega_\ell} |v_\ell - v_0|^{Q^*}\,d v_{\theta_{\ell}} \le &\int_{\Omega_\ell} v_\ell^{Q^*} - v_0^{Q^*}\,d v_{\theta_{\ell}} + C \int_{\Omega_\ell} v_0^{Q^*-1} |v_\ell - v_0| + v_0|v_\ell - v_0|^{Q^*-1}\,d v_{\theta_{\ell}} \\
			\le &C ( \int_{\Omega_\ell\setminus B_R(0)} v_0^{Q^*}\,d v_{\theta_{\ell}} )^{\frac{Q^*-1}{Q^*}} ( \int_{\Omega_\ell\setminus B_R(0)} |v_\ell - v_0|^{Q^*}\,d v_{\theta_{\ell}} )^{\frac{1}{Q^*}} \\
			& + C ( \int_{B_R(0)} v_0^{Q^*}\,d v_{\theta_{\ell}} )^{\frac{Q^*-1}{Q^*}} ( \int_{B_R(0)} |v_\ell - v_0|^{Q^*}\,d v_{\theta_{\ell}} )^{\frac{1}{Q^*}} \\
			& + C ( \int_{\Omega_\ell\setminus B_R(0)} v_0^{Q^*}\,d v_{\theta_{\ell}} )^{\frac{1}{Q^*}} ( \int_{\Omega_\ell\setminus B_R(0)} |v_\ell - v_0|^{Q^*}\,d v_{\theta_{\ell}} )^{\frac{Q^*-1}{Q^*}} \\
			& + C ( \int_{B_R(0)} v_0^{Q^*}\,d v_{\theta_{\ell}} )^{\frac{1}{Q^*}} ( \int_{B_R(0)} |v_\ell - v_0|^{Q^*}\,d v_{\theta_{\ell}} )^{\frac{Q^*-1}{Q^*}}+o(1).
		\end{aligned}
		\]
		By taking \(R\) large enough, the first and third terms can be made arbitrarily small. By the local convergence of \(v_\ell\) to \(v_0\), the second and fourth terms tends to \(0\) as \(\ell \to \infty\). Hence \(\int_{\Omega_\ell} |v_\ell - v_0|^{Q^*}\,d v_{\theta_{\ell}} \to 0\) as \(\ell\to\infty\). The strong convergence of the gradients is straightforward:
		\[
		\begin{aligned}
			\int_{\Omega_\ell} \nabla_{b,\ell}(v_\ell-v_0)\nabla_{b,\ell}v_0\,dv_{\theta_\ell} &\le \int_{B_R} |\nabla_{b,\ell}(v_\ell-v_0)|\,|\nabla_{b,\ell}v_0| \,dv_{\theta_\ell} \\
			&\quad+ \Big( \int_{\Omega_\ell\setminus B_R} |\nabla_{b,\ell}(v_\ell-v_0)|^2\,dv_{\theta_\ell} \Big)^{1/2} \Big( \int_{\Omega_\ell\setminus B_R} |\nabla_{b,\ell}v_0|^2\,dv_{\theta_\ell} \Big)^{1/2} \\
			&\le \int_{B_R} |\nabla_{b,\ell}(v_\ell-v_0)|\,|\nabla_{b,\ell}v_0| \,dv_{\theta_\ell} + C \Big( \int_{\mathbb H^n\setminus B_R} |\nabla_{\mathbb H}v_0|^2\,d\xi \Big)^{1/2}.
		\end{aligned}
		\]
		and therefore
		\[
		\lim_{\ell\to\infty} \int_{\Omega_\ell} \nabla_{b,\ell}(v_\ell-v_0)\nabla_{b,\ell}v_0 \,dv_{\theta_\ell} =0.
		\]
		Consequently, by \eqref{equlimsupvl} and since
		\[
		\int_{\Omega_\ell} |\nabla_{b,\ell}v_0|^2\,dv_{\theta_\ell} \to \int_{\mathbb H^n} |\nabla_{\mathbb H}v_0|^2\,d\xi = K(n)^{-2},
		\]
		we conclude:
		\[
		\begin{aligned}
			\int_{\Omega_\ell} |\nabla_{b,\ell}(v_\ell-v_0)|^2\,dv_{\theta_\ell} &= \int_{\Omega_\ell} |\nabla_{b,\ell}v_\ell|^2\,dv_{\theta_\ell} - \int_{\Omega_\ell} |\nabla_{b,\ell}v_0|^2\,dv_{\theta_\ell} \\
			&\quad -2 \int_{\Omega_\ell} \nabla_{b,\ell}(v_\ell-v_0)\nabla_{b,\ell}v_0 \,dv_{\theta_\ell} \le o(1).
		\end{aligned}
		\]
		Thus the strong convergence of the gradients follows.
	\end{proof}
	
	\begin{corollary}\label{coronepoint}
		For any \(\varepsilon>0\), there exists \(\delta_\varepsilon>0\) and \(\ell_\varepsilon>0\) such that for all \(\ell>\ell_\varepsilon\),
		\[
		\int_{M\setminus B_{\mu_\ell/\delta_\varepsilon}(q_\ell)} | \nabla_b u_\ell|^2 + u_\ell^{Q^*} \,dv_\theta \le \varepsilon.
		\]
		In particular, for any fixed \(\rho>0\),
		\[
		\lim_{\ell\to\infty} \int_{M\setminus B_{\rho}(q_\ell)} | \nabla_b u_\ell|^2 + u_\ell^{Q^*} \,dv_\theta = 0.
		\]
	\end{corollary}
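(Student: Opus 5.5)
The plan is to transfer the energy convergence of Proposition~\ref{propvstrong} back to \(M\) by a change of variables, and then compare with the global bounds from Proposition~\ref{propblowup}.

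Fix \(\varepsilon>0\). Since \(v_0\in S^{1,2}(\mathbb H^n)\cap L^{Q^*}(\mathbb H^n)\), we can choose \(R=R_\varepsilon\) so large that
\[
\int_{\mathbb H^n\setminus B_R(0)}\bigl(|\nabla_{\mathbb H}v_0|^2+v_0^{Q^*}\bigr)\,d\xi\le \varepsilon/8 .
\]
Then set \(\delta_\varepsilon=1/R\). For \(\ell\) large we have \(R\mu_\ell<r_{JL}\), and the dilation together with the normal coordinates give \(\Psi_\ell(\delta_{\mu_\ell}B_R(0))=B_{R\mu_\ell}(q_\ell)=B_{\mu_\ell/\delta_\varepsilon}(q_\ell)\). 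The rescaling is conformally invariant: the same change of variables used in the proof of Proposition~\ref{propvstrong} gives
\[
\int_{B_{\mu_\ell/\delta_\varepsilon}(q_\ell)}\bigl(|\nabla_bu_\ell|^2+u_\ell^{Q^*}\bigr)\,dv_\theta=\int_{B_R(0)}\bigl(|\nabla_{b,\ell}v_\ell|^2+v_\ell^{Q^*}\bigr)\,dv_{\theta_\ell}.
\]

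Next we bound this integral from below. By \eqref{equvstrong}, we have \(\|\nabla_{b,\ell}v_\ell\|_{L^2(B_R)}\ge \|\nabla_{b,\ell}v_0\|_{L^2(B_R)}-o(1)\), and similarly in \(L^{Q^*}\). The rescaled frames and volume converge to the Heisenberg ones uniformly on \(B_R\); this follows from \eqref{eq:JL-frame-comparison}--\eqref{eq:JL-volume-comparison} applied at scale \(\mu_\ell\), which gives errors \(O(\mu_\ell^2R^2)\). Hence
\[
\int_{B_R}\bigl(|\nabla_{b,\ell}v_0|^2+v_0^{Q^*}\bigr)\,dv_{\theta_\ell}\to\int_{B_R}\bigl(|\nabla_{\mathbb H}v_0|^2+v_0^{Q^*}\bigr)\,d\xi\ge K(n)^{-2}+1-\varepsilon/8 .
\]
Here we used that \(v_0\) is the extremal with \(\int v_0^{Q^*}=1\) and \(\int|\nabla_{\mathbb H}v_0|^2=K(n)^{-2}\).

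On the other hand, Proposition~\ref{propblowup}(2) and the normalization give
\[
\int_M\bigl(|\nabla_bu_\ell|^2+u_\ell^{Q^*}\bigr)\,dv_\theta\to K(n)^{-2}+1 .
\]
Subtracting, the integral over \(M\setminus B_{\mu_\ell/\delta_\varepsilon}(q_\ell)\) is at most \(\varepsilon/8+o(1)\), hence at most \(\varepsilon\) for \(\ell\ge\ell_\varepsilon\). The second statement follows because \(\mu_\ell\to0\): for any fixed \(\rho>0\), eventually \(B_{\mu_\ell/\delta_\varepsilon}(q_\ell)\subset B_\rho(q_\ell)\), so the limit is at most \(\varepsilon\) for every \(\varepsilon\).

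The main point of care is the convergence of the rescaled quantities \(|\nabla_{b,\ell}v_0|^2dv_{\theta_\ell}\) to \(|\nabla_{\mathbb H}v_0|^2d\xi\) on the fixed ball \(B_R\). This needs the uniformity in \(q_\ell\) of the Jerison--Lee expansions, which the paper has recorded. All other steps are bookkeeping.
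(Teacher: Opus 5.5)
Your proposal is correct and follows the same route as the paper. You use the scaling invariance of the energy and of the \(L^{Q^*}\) norm, together with the strong convergence \eqref{equvstrong}, to show that \(B_{R\mu_\ell}(q_\ell)\) eventually carries at least \(K(n)^{-2}+1-\varepsilon/8\) of the combined energy. You then subtract this from the global limit \(K(n)^{-2}+1\) given by Proposition~\ref{propblowup} and the normalization. Your explicit choice \(\delta_\varepsilon=1/R\), with \(R\) fixed by the tail of \(v_0\), spells out what the paper compresses into ``after a rescaling''.
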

	\begin{proof}
		By \eqref{equvstrong}, for any \(\varepsilon>0\), after a rescaling, there exists \(\delta_\varepsilon>0\)  and \(\ell_\varepsilon^\prime>0\) such that
		\[
		\int_{B_{\mu_\ell/\delta_\varepsilon}(q_\ell)}|\nabla_b u_\ell|^2  \,dv_\theta \ge \int_{\mathbb H^n}|\nabla_{\mathbb H}v_0|^2\,d\xi - \frac{\varepsilon}{4} = K(n)^{-2} - \frac{\varepsilon}{4}.
		\]
		and
		\[
		\int_{B_{\mu_\ell/\delta_\varepsilon}(q_\ell)} u_\ell^{Q^*} \,dv_\theta \ge \int_{\mathbb H^n} v_0^{Q^*} \,d\xi - \frac{\varepsilon}{4}= 1 - \frac{\varepsilon}{4}.
		\]
		Recalling that \(\|u_\ell\|_{L^{Q^*}(M)}=1\) and \(\|\nabla_b u_\ell\|_{L^2(M)}^2 \to K(n)^{-2}\), we can choose some \(\ell_\varepsilon>\ell_\varepsilon^\prime\) such that for all \(\ell>\ell_\varepsilon\),
		\[
		\int_{M\setminus B_{\mu_\ell/\delta_\varepsilon}(q_\ell)} | \nabla_b u_\ell|^2 + u_\ell^{Q^*} \,dv_\theta \le \varepsilon.
		\]
	\end{proof}
	
	\section{A pointwise estimate}\label{sectionpointwise}
	Throughout this section, we use this notation with \(q=q_\ell\), and write
	\[
	\Psi_\ell:=\Psi_{q_\ell},\qquad \rho_\ell:=\rho_{q_\ell}.
	\]
	
	\begin{proposition}\label{propptwise}
		There exists a constant \(C>0\), independent of \(\ell\), such that for all sufficiently large \(\ell\),
		\begin{equation}\label{eq:pointwise-main}
			u_\ell(x) \le C\mu_\ell^{\frac{Q-2}{2}} (\mu_\ell+\rho_\ell(x))^{2-Q}, \qquad x\in B_{r_G}(q_\ell),
		\end{equation}
		where \(r_G\le r_{JL}\) is a positive constant depending only on \((M, \theta)\). Consequently, in the rescaled coordinates
		\[
		v_\ell(\xi)= \mu_\ell^{\frac{Q-2}{2}} u_\ell(\Psi_\ell(\delta_{\mu_\ell}\xi)), \qquad \xi\in\Omega_\ell:=B_{r_G\mu_\ell^{-1}}(0),
		\]
		one has
		\begin{equation}\label{eq:v-decay}
			v_\ell(\xi) \le \frac{C}{1+\rho_0(\xi)^{Q-2}}, \qquad \xi\in\Omega_\ell .
		\end{equation}
	\end{proposition}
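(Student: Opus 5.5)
The plan is the Li--Ricciardi scheme, adapted to the Heisenberg setting. It has two stages: first a weak decay estimate derived from the energy concentration in Corollary~\ref{coronepoint}, and then a sharpening of that estimate by comparing $u_\ell$ with a Green-function type barrier on an annulus.

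\textbf{Stage one: a weak estimate.} We prove that for every $\varepsilon>0$ there are $R_\varepsilon$ and $\ell_\varepsilon$ such that
\[
\rho_\ell(x)^{\frac{Q-2}{2}}u_\ell(x)\le \varepsilon \qquad\text{whenever } R_\varepsilon\mu_\ell\le \rho_\ell(x)\le r_G,\ \ell\ge\ell_\varepsilon .
\]
Fix such a point $x$ with $r=\rho_\ell(x)$ and rescale by $\delta_r$ around $x$; the rescaled Jerison--Lee normal coordinates at $x$ are comparable to those at $q_\ell$ because the quasi-distances are equivalent. The rescaled function $w(\xi)=r^{\frac{Q-2}{2}}u_\ell(\Psi_x(\delta_r\xi))$ satisfies
\[
-\Delta w + \ell r^2 w=\lambda_\ell w^{Q^*-1}
\]
on a ball of fixed size. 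Since $\ell r^2 w\ge0$, $w$ is a subsolution of $-\Delta w\le \lambda_\ell w^{Q^*-1}$. By Corollary~\ref{coronepoint}, together with Proposition~\ref{propvstrong} for the intermediate range $\rho_\ell\sim R\mu_\ell$, the $L^{Q^*}$ norm of $w$ on that ball is small. The Jerison--Lee Moser iteration for subsolutions with small $L^{Q^*}$ norm (an $\varepsilon$-regularity statement) then gives $w(0)\le C\|w\|_{L^{Q^*}}$, which is small. In the inner range $\rho_\ell\le R\mu_\ell$ the estimate \eqref{eq:pointwise-main} follows directly from $v_\ell\to v_0$ in $\Gamma^{1,\beta}_{loc}$ and the bound $v_\ell\le1$.

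\textbf{Stage two: the barrier comparison.} On the annulus $A_\ell=\{R\mu_\ell\le\rho_\ell\le r_G\}$, the weak estimate gives
\[
\lambda_\ell u_\ell^{Q^*-2}\le \varepsilon^{Q^*-2}\rho_\ell^{-2},
\]
so $u_\ell$ satisfies the linear inequality
\[
-\Delta_b u_\ell\le V u_\ell, \qquad V\le \varepsilon' \rho_\ell^{-2}.
\]
We take as barrier
\[
\Phi=\mu_\ell^{\frac{Q-2}{2}}\rho_\ell^{2-Q+\sigma}+\eta_\ell\,\rho_\ell^{-\sigma}
\]
with a small $\sigma>0$, where $\rho_\ell^{2-Q}$ plays the role of the Folland--Stein fundamental solution, $\Delta_{\mathbb H}\rho_0^{2-Q}=0$. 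The main obstacle lies here: we must check that $-\Delta_b\Phi\ge V\Phi$ on $A_\ell$. On the Heisenberg group,
\[
\Delta_{\mathbb H}\rho_0^{a}=a(a+Q-2)\rho_0^{a-2}|\nabla_{\mathbb H}\rho_0|^2 ,
\]
and $|\nabla_{\mathbb H}\rho_0|^2=|z|^2/\rho_0^2$ vanishes on the $t$-axis. So the crude radial barrier has degenerate strict supersolution margin near that axis, and $V\le\varepsilon'\rho^{-2}$ cannot be absorbed there. The first fix we would try is a Green-function comparison instead: let $G$ be the Green function of $-\Delta_b+1$ on $M$, which satisfies $G(x,y)\sim\rho(x,y)^{2-Q}$ by Folland--Stein. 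Using the representation formula
\[
u_\ell(x)\le\int G(x,y)\,\lambda_\ell u_\ell^{Q^*-1}(y)\,dy + \text{boundary terms},
\]
we would bootstrap: insert the weak decay $u_\ell\le\varepsilon\rho^{-(Q-2)/2}$ together with the inner bubble profile, and iterate with potential estimates of the form $\int\rho(x,y)^{2-Q}\rho(y)^{-2-\gamma}\,dy\lesssim \rho(x)^{-\gamma}$, improving the exponent step by step to $2-Q$. If that proves messy, the alternative is a Harnack argument: use the subelliptic Harnack inequality on dyadic Korányi shells to turn the radial barrier comparison into an averaged one, which avoids the degeneracy on the $t$-axis. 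Throughout, the perturbations of $\Delta_b$ from $\Delta_{\mathbb H}$ coming from \eqref{eq:JL-frame-comparison} are of relative size $O(\rho^2)$ and are absorbed once $r_G$ is small.

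\textbf{Conclusion.} The comparison on $A_\ell$ yields
\[
u_\ell\le C\mu_\ell^{\frac{Q-2}{2}}\rho_\ell^{2-Q+\sigma}+\eta_\ell\,\rho_\ell^{-\sigma}
\]
with $\eta_\ell\lesssim\sup_{\partial B_{r_G}}u_\ell$. That boundary supremum is controlled by $C\mu_\ell^{\frac{Q-2}{2}}$: apply Harnack in the region away from $q_\ell$ and test the equation against the Green function to get $\int u_\ell^{Q^*-1}\lesssim\mu_\ell^{\frac{Q-2}{2}}$. Feeding this improved bound back into the representation formula once more removes the loss $\sigma$ and gives \eqref{eq:pointwise-main}. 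Finally, \eqref{eq:v-decay} is just the rescaling of \eqref{eq:pointwise-main}, using $\rho_\ell(\Psi_\ell(\delta_{\mu_\ell}\xi))=\mu_\ell\rho_0(\xi)$.
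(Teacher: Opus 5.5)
Your Stage one (the $\varepsilon$-regularity bound $\rho_\ell^{\frac{Q-2}{2}}u_\ell\le\varepsilon$ on $R_\varepsilon\mu_\ell\le\rho_\ell\le r_G$) and your core estimate from $v_\ell\le1$ are fine. Stage two, however, is never carried out, and it is where the sharp decay has to come from. You correctly note that $\Delta_{\mathbb H}\rho_0^{a}=a(a+Q-2)\rho_0^{a-2}|\nabla_{\mathbb H}\rho_0|^2$ degenerates on the $t$-axis, so neither $\rho_\ell^{2-Q+\sigma}$ nor $\rho_\ell^{-\sigma}$ can absorb $V\le\varepsilon'\rho_\ell^{-2}$ there. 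Yet your Conclusion starts from ``the comparison on $A_\ell$ yields $u_\ell\le C\mu_\ell^{\frac{Q-2}{2}}\rho_\ell^{2-Q+\sigma}+\eta_\ell\rho_\ell^{-\sigma}$''. That is exactly the comparison you have just shown to be unavailable, and your two remedies are only named.

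Two further steps also fail as written. First, ``improving the exponent step by step'' does not work, because the weak decay is scale-critical. Inserting $u_\ell\le\varepsilon\rho^{-\frac{Q-2}{2}}$ into $\int\rho(x,y)^{2-Q}u_\ell^{Q^*-1}(y)\,dy$ returns the same power $\rho(x)^{-\frac{Q-2}{2}}$, only with a better constant. Second, the bound $\eta_\ell\lesssim\mu_\ell^{\frac{Q-2}{2}}$ via $\int_M u_\ell^{Q^*-1}\lesssim\mu_\ell^{\frac{Q-2}{2}}$ is circular. The weak estimate only gives $\varepsilon^{Q^*-1}\int_{R\mu_\ell}^{r_G}\rho^{-\frac{Q+2}{2}}\rho^{Q-1}\,d\rho\sim\varepsilon^{Q^*-1}r_G^{\frac{Q-2}{2}}$ for the annulus, which is small but not $O(\mu_\ell^{\frac{Q-2}{2}})$. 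That smaller size is a consequence of \eqref{eq:pointwise-main}, not an input, and testing the equation against $G$ gives no upper bound for this integral.

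The missing idea is a Green-function comparison that needs no strict pointwise supersolution margin. The paper builds it from the exact Green function of $-\Delta_b+1$ constructed in the appendix.
\begin{itemize}
\item $\Phi_\ell=\mu_\ell^{\frac{Q-2}{2}}G_{q_\ell}$ satisfies $(-\Delta_b+\ell)\Phi_\ell=(\ell-1)\Phi_\ell\ge0$ off $q_\ell$.
\item Subtracting the weak forms gives $\int\Phi_\ell^2\langle\nabla_b\zeta_\ell,\nabla_b\psi\rangle\le\lambda_\ell\int u_\ell^{Q^*-2}\Phi_\ell^2\zeta_\ell\psi$ for $\zeta_\ell=u_\ell/\Phi_\ell$.
\item The ground-state identity $\int(|\nabla_b(\Phi_\ell w)|^2+\Phi_\ell^2w^2)=\int\Phi_\ell^2|\nabla_bw|^2$ turns Folland--Stein into a Sobolev inequality for the measure $\Phi_\ell^{Q^*}dv_\theta$.
\item The nonlinear term is controlled by H\"older together with the \emph{integral} smallness $\int_{M\setminus B_{R\mu_\ell}(q_\ell)}u_\ell^{Q^*}\le\varepsilon_0$ from Corollary~\ref{coronepoint}.
\end{itemize}
An iteration in that measure then bounds $\zeta_\ell$ outside $B_{2R\mu_\ell}(q_\ell)$ uniformly in $\ell$, so no pointwise weak estimate is needed. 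Because $\lambda_\ell\zeta_\ell^{Q^*-2}$ is small only in the critical space $L^{Q/2}(\Phi_\ell^{Q^*}dv_\theta)$, that iteration should begin with a Moser step raising $\zeta_\ell$ above $L^{Q^*}$.

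Alternatively, your representation idea can be completed if you run it as a contraction rather than an exponent bootstrap. Since $M$ is closed, $G>0$ and $(-\Delta_b+1)u_\ell\le\lambda_\ell u_\ell^{Q^*-1}$, you have $u_\ell(x)\le\lambda_\ell\int_MG(x,y)u_\ell^{Q^*-1}(y)\,dv_\theta(y)$ with no boundary term. Write $d=d_\theta$ and use three ingredients:
\begin{itemize}
\item the kernel bound $G(x,y)\le Cd(x,y)^{2-Q}$;
\item Stage one extended to all of $M$, so that $\lambda_\ell u_\ell^{Q^*-2}\le\varepsilon\, d(q_\ell,\cdot)^{-2}$ off $B_{R\mu_\ell}(q_\ell)$;
\item the potential estimate $\int_Md(x,y)^{2-Q}d(y,q_\ell)^{-2-\gamma}\,dv_\theta(y)\lesssim d(x,q_\ell)^{-\gamma}$ for $0<\gamma<Q-2$.
\end{itemize}
Then the finite quantity $N_\ell=\sup\{d(q_\ell,x)^{\gamma}u_\ell(x):d(q_\ell,x)\ge R\mu_\ell\}$ satisfies $N_\ell\le C\mu_\ell^{\gamma-\frac{Q-2}{2}}+C\varepsilon N_\ell$. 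Taking $\gamma=Q-2-\sigma$ and passing once more through the representation formula removes the loss $\sigma$ and gives \eqref{eq:pointwise-main}.
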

	
	\begin{proof}
		In Appendix~\ref{appendixgreen}, we construct the Green function \(G_a\) of the operator \(L_1=-\Delta_b+1\) with pole at \(a\).  It satisfies
		\[
		L_1G_a=\delta_a
		\]
		in the sense of distributions. By Proposition~\ref{propgreenestimate}, applied with \(a=q_\ell\), there exist constants \(C>0\) and \(r_G>0\), depending only on \((M,\theta)\), such that
		\[
		C^{-1}\rho_\ell(x)^{2-Q} \le G_{q_\ell}(x) \le C\rho_\ell(x)^{2-Q}, \qquad 0<\rho_\ell(x)<r_G .
		\]
		
		Define
		\[
		\Phi_\ell(x)=\mu_\ell^{\frac{Q-2}{2}}G_{q_\ell}(x).
		\]
		Then
		\[
		(-\Delta_b+1)\Phi_\ell=0\quad\text{in }M\setminus\{q_\ell\},
		\]
		and, since \(\ell\ge1\),
		\[
		(-\Delta_b+\ell)\Phi_\ell=(\ell-1)\Phi_\ell\ge0 \quad\text{in }M\setminus\{q_\ell\}.
		\]
		In particular, \(\Phi_\ell\) is a positive supersolution for the operator \(-\Delta_b+\ell\) away from \(q_\ell\).
		
		We shall prove that \(u_\ell/\Phi_\ell\) is uniformly bounded outside a fixed multiple of the blow-up scale. Fix \(R\) in Corollary~\ref{coronepoint} so large that the inequality holds with \(\varepsilon=\varepsilon_0\), where \(\varepsilon_0>0\) will be chosen below.
		
		Set
		\[
		D_{\ell,R}=M\setminus B_{R\mu_\ell}(q_\ell), \qquad \zeta_\ell=\frac{u_\ell}{\Phi_\ell} \quad\text{on }D_{\ell,R}.
		\]
		We derive a weak differential inequality for \(\zeta_\ell\). Let \(\psi\in C_c^\infty(M\setminus\{q_\ell\})\), \(\psi\ge0\). Testing the equation for \(u_\ell\) with \(\Phi_\ell\psi\), and testing the supersolution inequality for \(\Phi_\ell\) with \(\Phi_\ell\zeta_\ell\psi\), then subtracting, gives
		\[
		\int_M \Phi_\ell^2 \langle\nabla_b\zeta_\ell,\nabla_b\psi\rangle\,dv_\theta \le \lambda_\ell \int_M \Phi_\ell^{Q^*}\zeta_\ell^{Q^*-1}\psi\,dv_\theta.
		\]
		This is the CR analogue of the quotient inequality used in the Green-function comparison argument of Li and Ricciardi \cite{LR2003}.
		
		We next record the weighted Sobolev inequality associated with \(\Phi_\ell\). If \(w\in C_c^\infty(M\setminus\{q_\ell\})\), then
		\[
		\Big( \int_M |w|^{Q^*}\Phi_\ell^{Q^*}\,dv_\theta \Big)^{2/Q^*} \le C \int_M \Phi_\ell^2|\nabla_bw|^2\,dv_\theta.
		\]
		Indeed, applying the Folland--Stein Sobolev inequality to \(\Phi_\ell w\) gives
		\[
		\|\Phi_\ell w\|_{L^{Q^*}(M)}^2 \le C\int_M ( |\nabla_b(\Phi_\ell w)|^2+\Phi_\ell^2w^2 )\,dv_\theta.
		\]
		Since \((-\Delta_b+1)\Phi_\ell=0\) on the support of \(w\), integration by parts gives
		\[
		\int_M ( |\nabla_b(\Phi_\ell w)|^2+\Phi_\ell^2w^2)\,dv_\theta = \int_M\Phi_\ell^2|\nabla_bw|^2\,dv_\theta.
		\]
		This proves the weighted Sobolev inequality.
		
		We now apply a standard De Giorgi level iteration to the quotient \(\zeta_\ell\).  For \(k\ge0\), let
		\[
		w_k=(\zeta_\ell-k)_+,\quad E_k=\{\zeta_\ell>k\}.
		\]
		Let \(\chi\in C_c^\infty(M\setminus\{q_\ell\})\), \(0\le\chi\le1\), with \(\operatorname{supp}\chi\subset M\setminus B_{R\mu_\ell}(q_\ell)\).  Taking \(\psi=\chi^2w_k\) in the quotient inequality gives, by Young's inequality,
		\[
		\int_M\Phi_\ell^2|\nabla_b(\chi w_k)|^2\,dv_\theta \le C\int_M\Phi_\ell^2w_k^2|\nabla_b\chi|^2\,dv_\theta + C\int_{\operatorname{supp}\chi} u_\ell^{Q^*-2}\Phi_\ell^2\zeta_\ell\chi^2w_k\,dv_\theta .
		\]
		Since \(w_k=0\) outside \(E_k\) and \(\zeta_\ell=w_k+k\) on \(E_k\), the last term splits as
		\[
		\int_{\operatorname{supp}\chi} u_\ell^{Q^*-2}\Phi_\ell^2\zeta_\ell\chi^2w_k\,dv_\theta = \int_{\operatorname{supp}\chi} u_\ell^{Q^*-2}(\Phi_\ell\chi w_k)^2\,dv_\theta +k\int_{\operatorname{supp}\chi\cap E_k} u_\ell^{Q^*-2}\Phi_\ell^2\chi^2w_k\,dv_\theta .
		\]
		Write
		\[
		A_{\chi,k}:= \Big( \int_M(\chi w_k)^{Q^*}\Phi_\ell^{Q^*}\,dv_\theta \Big)^{2/Q^*}, \quad B_{\chi,k}:= \Big( \int_{\operatorname{supp}\chi\cap E_k}\Phi_\ell^{Q^*}\,dv_\theta \Big)^{2/Q^*},
		\]
		and
		\[
		U_\chi:= \Big( \int_{\operatorname{supp}\chi}u_\ell^{Q^*}\,dv_\theta \Big)^{2/Q}.
		\]
		By Holder's inequality,
		\[
		\int_{\operatorname{supp}\chi} u_\ell^{Q^*-2}(\Phi_\ell\chi w_k)^2\,dv_\theta \le U_\chi A_{\chi,k},
		\]
		while
		\[
		k\int_{\operatorname{supp}\chi\cap E_k} u_\ell^{Q^*-2}\Phi_\ell^2\chi^2w_k\,dv_\theta \le kU_\chi A_{\chi,k}^{1/2}B_{\chi,k}^{1/2}.
		\]
		Hence, by Young's inequality, for every \(\eta>0\),
		\[
		kU_\chi A_{\chi,k}^{1/2}B_{\chi,k}^{1/2} \le \eta A_{\chi,k}+C_\eta k^2U_\chi^2B_{\chi,k}.
		\]
		Choosing \(R\) large enough, Corollary~\ref{coronepoint} gives
		\[
		\int_{M\setminus B_{R\mu_\ell}(q_\ell)}u_\ell^{Q^*}\,dv_\theta\le\varepsilon_0,
		\]
		where \(\varepsilon_0>0\) is chosen sufficiently small.  Combining the preceding estimates with the weighted Sobolev inequality applied to \(\chi w_k\), and then absorbing the \(A_{\chi,k}\)-terms, we obtain
		\begin{equation}\label{eququotient}
			\Big( \int_M(\chi w_k)^{Q^*}\Phi_\ell^{Q^*}\,dv_\theta \Big)^{2/Q^*} \le C\int_M\Phi_\ell^2w_k^2|\nabla_b\chi|^2\,dv_\theta + Ck^2 \Big( \int_{\operatorname{supp}\chi\cap E_k}\Phi_\ell^{Q^*}\,dv_\theta \Big)^{2/Q^*}.
		\end{equation}
		
		Now choose
		\[
		R_j=R\mu_\ell(2-2^{-j}),\qquad A_j=M\setminus B_{R_j}(q_\ell),
		\]
		and let \(\chi_j\in C^\infty(M)\) satisfy
		\[
		0\le\chi_j\le1,\qquad \chi_j=0\ \text{on }B_{R_j}(q_\ell),\qquad \chi_j=1\ \text{on }A_{j+1}, \qquad |\nabla_b\chi_j|\le \frac{C2^j}{R\mu_\ell}.
		\]
		Let
		\[
		k_j=K(1-2^{-j}),\qquad w_j=(\zeta_\ell-k_j)_+.
		\]
		Applying \eqref{eququotient} with \(\chi=\chi_j\) and \(k=k_j\), and using the local Green estimate for \(\Phi_\ell\), gives
		\[
		\Big( \int_{A_{j+1}}w_j^{Q^*}\Phi_\ell^{Q^*}\,dv_\theta \Big)^{2/Q^*} \le C_R4^j\int_{A_j}w_j^2\Phi_\ell^{Q^*}\,dv_\theta + Ck_j^2 \Big( \int_{A_j\cap\{\zeta_\ell>k_j\}}\Phi_\ell^{Q^*}\,dv_\theta \Big)^{2/Q^*}.
		\]
		Since \(k_{j+1}-k_j=K2^{-j-1}\), the usual De Giorgi level iteration yields, for \(K\) large enough,
		\[
		\sup_{M\setminus B_{2R\mu_\ell}(q_\ell)}\zeta_\ell\le C_R,
		\]
		where \(C_R\) is independent of \(\ell\).  Therefore,
		\[
		u_\ell(x)\le C_R\Phi_\ell(x), \qquad x\in M\setminus B_{2R\mu_\ell}(q_\ell).
		\]
		In particular, for \(x\in B_{r_G}(q_\ell)\setminus B_{2R\mu_\ell}(q_\ell)\), the local Green kernel estimate yields
		\[
		u_\ell(x) \le C_R\mu_\ell^{\frac{Q-2}{2}}\rho_\ell(x)^{2-Q}.
		\]
		
		It remains to treat the core region \(B_{2R\mu_\ell}(q_\ell)\). By the local blow-up rescaling already established,
		\[
		v_\ell(\xi) = \mu_\ell^{\frac{Q-2}{2}} u_\ell\bigl(\Psi_{q_\ell}(\delta_{\mu_\ell}\xi)\bigr)
		\]
		is locally uniformly bounded. Hence, for \(\rho_\ell(x)\le2R\mu_\ell\),
		\[
		u_\ell(x) \le C_R\mu_\ell^{-\frac{Q-2}{2}} \le C_R \mu_\ell^{\frac{Q-2}{2}} \bigl(\mu_\ell+\rho_\ell(x)\bigr)^{2-Q}.
		\]
		Combining this core estimate with the exterior estimate gives
		\[
		u_\ell(x) \le C \mu_\ell^{\frac{Q-2}{2}} \bigl(\mu_\ell+\rho_\ell(x)\bigr)^{2-Q}, \qquad x\in B_{r_G}(q_\ell),
		\]
		after enlarging \(C\). Finally, if \(x=\Psi_{q_\ell}(\delta_{\mu_\ell}\xi)\), then \(\rho_\ell(x)=\mu_\ell\rho_0(\xi)\), and therefore
		\[
		v_\ell(\xi) \le C(1+\rho_0(\xi))^{2-Q}.
		\]
		The proposition is proved.
	\end{proof}
	
	\begin{corollary}\label{estimatering}
		For every \(0 < r \le r_G\), there exists  \(\ell_r>0\) and a constant \(C>0\), independent of \(\ell\), such that for all \(\ell>\ell_r\),
		\[
		\int_{B_{r}(q_\ell)} u_\ell^2\,dv_\theta \le C\int_{B_{r/2}(q_\ell)} u_\ell^2\,dv_\theta,
		\]
	\end{corollary}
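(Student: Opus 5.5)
The plan is to split \(B_r(q_\ell)\) into the inner ball \(B_{r/2}(q_\ell)\) and the annulus \(B_r(q_\ell)\setminus B_{r/2}(q_\ell)\). It then suffices to bound the \(L^2\)-mass on the annulus by a multiple of the \(L^2\)-mass on \(B_{r/2}(q_\ell)\). The upper bound on the annulus will come from Proposition~\ref{propptwise}, and the lower bound on the inner ball from the blow-up convergence in Proposition~\ref{propvstrong}. Both sides will be compared as powers of \(\mu_\ell\).

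\emph{Upper bound on the annulus.} For \(x\in B_r(q_\ell)\setminus B_{r/2}(q_\ell)\) we have \(\rho_\ell(x)\ge r/2\). Since \(r\le r_G\), estimate \eqref{eq:pointwise-main} gives
\[
u_\ell(x)\le C\mu_\ell^{\frac{Q-2}{2}}(r/2)^{2-Q}.
\]
The volume of \(B_r(q_\ell)\) is bounded by a constant, using \eqref{eq:JL-volume-comparison} uniformly in \(q_\ell\). Hence
\[
\int_{B_r(q_\ell)\setminus B_{r/2}(q_\ell)}u_\ell^2\,dv_\theta\le C_r\,\mu_\ell^{Q-2}.
\]

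\emph{Lower bound on the inner ball.} Fix \(R=1\), say, and take \(\ell\) so large that \(\mu_\ell<r/2\), so that \(B_{\mu_\ell}(q_\ell)\subset B_{r/2}(q_\ell)\). Change variables \(x=\Psi_\ell(\delta_{\mu_\ell}\xi)\), under which \(u_\ell=\mu_\ell^{-\frac{Q-2}{2}}v_\ell\) and \(dv_\theta=\mu_\ell^{Q}(1+O(\mu_\ell^2\rho_0^2))\,d\xi\). This gives
\[
\int_{B_{r/2}(q_\ell)}u_\ell^2\,dv_\theta\ \ge\ \int_{B_{\mu_\ell}(q_\ell)}u_\ell^2\,dv_\theta\ \ge\ \tfrac12\mu_\ell^{2}\int_{B_1(0)}v_\ell^2\,d\xi .
\]
By the local uniform convergence \(v_\ell\to v_0\) on compact sets (proof of Proposition~\ref{propvstrong}), the last integral tends to \(\int_{B_1(0)}v_0^2\,d\xi>0\). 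Therefore it is at least a fixed constant \(c>0\) for \(\ell>\ell_r\).

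\emph{Conclusion.} Combining the two bounds,
\[
\int_{B_r(q_\ell)}u_\ell^2\,dv_\theta\le\int_{B_{r/2}(q_\ell)}u_\ell^2\,dv_\theta+C_r\mu_\ell^{Q-2}\le\bigl(1+C_r c^{-1}\mu_\ell^{Q-4}\bigr)\int_{B_{r/2}(q_\ell)}u_\ell^2\,dv_\theta .
\]
Since \(Q=2n+2\ge4\) and \(\mu_\ell\to0\), the factor \(\mu_\ell^{Q-4}\le1\) for large \(\ell\), which yields the claim with \(C=1+C_r/c\). The constant depends on \(r\) but not on \(\ell\).

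The only delicate point is the borderline case \(n=1\), where \(Q=4\) and the two powers of \(\mu_\ell\) coincide, so the comparison produces only a bounded constant. This is still enough for the statement, since it only asks for \(C\) independent of \(\ell\). One could sharpen the case \(n=1\) by integrating \(v_\ell^2\) over larger balls \(B_{r/(2\mu_\ell)}(0)\), which gives a logarithmic gain, but this is not needed. Beyond this, the argument only requires checking that all constants in \eqref{eq:pointwise-main} and \eqref{eq:JL-volume-comparison} are uniform in the moving center \(q_\ell\), which the paper has already arranged.
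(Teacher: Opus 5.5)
Your proposal is correct and follows essentially the same route as the paper. You bound the annulus mass by $C_r\mu_\ell^{Q-2}$ via the pointwise estimate of Proposition~\ref{propptwise}, bound the inner-ball mass from below by $c\,\mu_\ell^{2}$ via the local convergence $v_\ell\to v_0$ on $B_1(0)$, and conclude from $Q\ge 4$. Your explicit handling of the volume factor under rescaling and your remark on the borderline case $n=1$ are harmless refinements, apart from an inessential factor of $2$ in the final constant.
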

	
	\begin{proof}
		By the pointwise estimate in Proposition~\ref{propptwise} and the local volume comparison in pseudohermitian normal coordinates, we have
		\[
		\begin{aligned}
			\int_{B_{r}(q_\ell)\setminus B_{r/2}(q_\ell)}u_\ell^2\,dv_\theta &\le C\mu_\ell^{Q-2} \int_{r/2}^{r} \frac{\rho^{Q-1}}{(\mu_\ell+\rho)^{2Q-4}}\,d\rho  \\
			&\le C\mu_\ell^{Q-2}.
		\end{aligned}
		\]
		On the other hand, by the rescaling centered at \(q_\ell\), for all sufficiently large \(\ell\),
		\[
		\int_{B_{r/2}(q_\ell)}u_\ell^2\,dv_\theta \ge \int_{B_{\mu_\ell}(q_\ell)}u_\ell^2\,dv_\theta = \mu_\ell^2 \int_{B_1(0)}v_\ell^2\,dv_{\theta_\ell} \ge C\mu_\ell^2 .
		\]
		Here the last inequality follows from the local convergence \(v_\ell\to v_0\) and \(v_0(0)=1\). Since \(Q\ge4\) and \(\mu_\ell\to0\), we have
		\[
		\int_{B_{r}(q_\ell)\setminus B_{r/2}(q_\ell)}u_\ell^2\,dv_\theta \le C\mu_\ell^{Q-2} \le C\mu_\ell^2 \le C\int_{B_{r/2}(q_\ell)}u_\ell^2\,dv_\theta .
		\]
		Therefore,
		\[
		\int_{B_{r}(q_\ell)}u_\ell^2\,dv_\theta = \int_{B_{r/2}(q_\ell)}u_\ell^2\,dv_\theta + \int_{B_{r}(q_\ell)\setminus B_{r/2}(q_\ell)}u_\ell^2\,dv_\theta \le C\int_{B_{r/2}(q_\ell)}u_\ell^2\,dv_\theta .
		\]
		This proves the corollary.
	\end{proof}

	\section{The proof of Theorem~\ref{Mainthm}}\label{sectionpohozaev}

Choose \(r_P>0\) sufficiently small so that
\(
2r_P<r_G,
\)
and so that the Jerison--Lee coordinate estimates of
Section~\ref{sectionpreliminary} hold uniformly on \(B_{2r_P}(q)\) for
every \(q\in M\).  Throughout this section, unless otherwise specified, all
integrals with respect to \(dv_\theta\) are taken over
\(B_{r_P}(q_\ell)\).  We write
\[
\Psi_\ell:=\Psi_{q_\ell},
\qquad
\rho:=\rho_\ell.
\]

Choose \(\eta_\ell\in C^\infty(M)\) such that
\[
0\le\eta_\ell\le1,
\qquad
\eta_\ell\equiv1
\quad\text{on }B_{r_P/2}(q_\ell),
\qquad
\operatorname{supp}\eta_\ell\Subset B_{r_P}(q_\ell).
\]
Since \(r_P\) is fixed and \(M\) is compact, the functions \(\eta_\ell\)
may be chosen so that all their derivatives used below are bounded uniformly
in \(\ell\).  Set
\(
w_\ell:=\eta_\ell u_\ell.
\)

We first record a uniform lower bound for the critical norm of \(w_\ell\).
Since \(\mu_\ell\to0\), for all sufficiently large \(\ell\),
\[
B_{\mu_\ell}(q_\ell)\subset B_{r_P/2}(q_\ell).
\]
Consequently,
\[
\int w_\ell^{Q^*}\,dv_\theta
\ge
\int_{B_{\mu_\ell}(q_\ell)}
u_\ell^{Q^*}\,dv_\theta  
=
\int_{B_1(0)}
v_\ell^{Q^*}\,dv_{\theta_\ell}.
\]
By the local convergence \(v_\ell\to v_0\), there exists \(c_0>0\),
independent of \(\ell\), such that
\begin{equation}\label{eq:cutoff-mass-lower}
\int w_\ell^{Q^*}\,dv_\theta\ge c_0
\end{equation}
for all sufficiently large \(\ell\).

The following estimate shows how the error term of Jerison--Lee normal coordinates controls \(\ell \int w_\ell^2\,dv_\theta\).  Its proof compares the localized quotient directly
with the sharp Folland--Stein inequality on \(\mathbb H^n\).

\begin{lemma}\label{lemmapohozaev}
For all sufficiently large \(\ell\),
\[
\ell\int w_\ell^2\,dv_\theta
\le
C\int \rho^2w_\ell^{Q^*}\,dv_\theta
+
C\int \rho^2|\nabla_bw_\ell|^2\,dv_\theta 
+
C\int \rho^4|T_\theta w_\ell|^2\,dv_\theta,
\]
where \(C>0\) is independent of \(\ell\).
\end{lemma}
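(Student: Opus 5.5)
We will compare the localized quotient of \(w_\ell\) on \(M\) with the sharp Folland--Stein inequality on \(\mathbb H^n\), transported by \(\Psi_\ell\). The first step is an energy identity for \(w_\ell\). We test \eqref{eqnEL} with \(\eta_\ell^2u_\ell\) and integrate by parts; the standard cutoff identity then gives
\[
\int|\nabla_bw_\ell|^2\,dv_\theta+\ell\int w_\ell^2\,dv_\theta
=\lambda_\ell\int \eta_\ell^2u_\ell^{Q^*}\,dv_\theta+\int u_\ell^2|\nabla_b\eta_\ell|^2\,dv_\theta .
\]
Since \(\eta_\ell\le1\), we have \(\eta_\ell^2u_\ell^{Q^*}\le \eta_\ell^{Q^*}u_\ell^{Q^*}\) only in the wrong direction. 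We therefore write \(\eta_\ell^2u_\ell^{Q^*}=w_\ell^{Q^*}+(\eta_\ell^2-\eta_\ell^{Q^*})u_\ell^{Q^*}\). The error is supported in the annulus \(B_{r_P}\setminus B_{r_P/2}\), where \(\rho\sim r_P\). On that annulus Proposition~\ref{propptwise} gives \(u_\ell\le C\mu_\ell^{(Q-2)/2}\). This error, together with \(\int u_\ell^2|\nabla_b\eta_\ell|^2\), is thus \(O(\mu_\ell^{Q-2})\).

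We must absorb that error into the right-hand side of the lemma. By the rescaling, \(\int\rho^2w_\ell^{Q^*}\ge\int_{B_{\mu_\ell}\setminus B_{\mu_\ell/2}}\rho^2u_\ell^{Q^*}\ge c\mu_\ell^2\), using \(v_\ell\to v_0\) locally. Since \(Q\ge4\), we get \(\mu_\ell^{Q-2}\le\mu_\ell^2\le C\int\rho^2w_\ell^{Q^*}\). This is where the pointwise estimate is essential.

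The second step is to pull back to \(\mathbb H^n\) and write \(\tilde w=w_\ell\circ\Psi_\ell\), which is compactly supported in \(B_{r_P}(0)\). Equation \eqref{eq:JL-frame-comparison} gives pointwise
\[
|\nabla_{\mathbb H}\tilde w|^2\le(1+C\rho_0^2)|\nabla_b w_\ell|^2+C\rho_0^6|T\tilde w|^2 .
\]
We also have \(|Tw|\) comparable to \(|T_\theta w|\) up to \(O(\rho_0)\) horizontal terms; we expect \(T_\theta=T+O(\rho_0)\)-horizontal\(+O(\rho_0^2)T\) from the Jerison--Lee expansions. Hence \(\rho^6|T|^2\) is bounded by \(\rho^4|T_\theta w|^2+\rho^2|\nabla_bw|^2\). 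The volume comparison \eqref{eq:JL-volume-comparison} is applied in the same way. Plugging into the sharp Heisenberg inequality and using \(\int w_\ell^{Q^*}\ge c_0\), \eqref{eq:cutoff-mass-lower}, we obtain
\[
K(n)^{-2}\Big(\int w_\ell^{Q^*}dv_\theta\Big)^{2/Q^*}\le\int|\nabla_bw_\ell|^2dv_\theta+C\!\int(\rho^2|\nabla_bw_\ell|^2+\rho^4|T_\theta w_\ell|^2+\rho^2w_\ell^{Q^*})dv_\theta .
\]
The \(\rho^2 w^{Q^*}\) term comes from the \(1+O(\rho^2)\) volume factor. We then combine this with the energy identity:
\[
\ell\int w_\ell^2\le\lambda_\ell\int w_\ell^{Q^*}-K(n)^{-2}\Big(\int w_\ell^{Q^*}\Big)^{2/Q^*}+C(\text{error terms}).
\]

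The final step uses \(\lambda_\ell<K(n)^{-2}\) and \(\int w_\ell^{Q^*}\le1\). Because \(Q^*>2\), we have \(s\le s^{2/Q^*}\) for \(s\in[0,1]\), so \(\lambda_\ell s-K(n)^{-2}s^{2/Q^*}\le0\). This leaves exactly the three error terms. The main obstacle is the careful frame comparison in the second step, specifically controlling the \(T\)-components of \(W_\alpha-Z_\alpha\) (the \(O(\rho_0^3)T\) piece) and the cross terms. If \(T_\theta\) differs from \(T\) by a term not of the form assumed, we would instead bound \(\rho^6|T\tilde w|^2\) via \(T=-\tfrac{i}{2n}\sum[Z_\alpha,Z_{\bar\alpha}]\) together with an integration by parts. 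All cross terms are handled by Cauchy--Schwarz, \(2ab\le\rho^2a^2+\rho^{-2}\cdot\rho^4 b^2\).
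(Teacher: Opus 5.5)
Your proof is correct, and its backbone is the paper's. You use the same cutoff identity, obtained by testing \eqref{eqnEL} with \(\eta_\ell^2u_\ell\). You then transplant \(w_\ell\) to \(\mathbb H^n\) via \(\Psi_\ell\), apply the Jerison--Lee frame and volume expansions, and use the mean value theorem together with \eqref{eq:cutoff-mass-lower} to compare with the sharp Heisenberg inequality.

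The only real difference is how you dispose of the two localization errors.
\begin{itemize}
\item The paper never splits \(\eta_\ell^2u_\ell^{Q^*}\). It writes this term as \(w_\ell^2u_\ell^{Q^*-2}\) and applies H\"older with \(\|u_\ell\|_{L^{Q^*}(M)}=1\). This gives \(\int\eta_\ell^2u_\ell^{Q^*}\le(\int w_\ell^{Q^*})^{2/Q^*}\) with no error term.
\item The paper absorbs \(\int u_\ell^2|\nabla_b\eta_\ell|^2\) into \(\tfrac{\ell}{2}\int w_\ell^2\) through the \(L^2\) doubling estimate of Corollary~\ref{estimatering}, and then uses \(\lambda_\ell<K(n)^{-2}\) directly.
\item You instead bound both annulus errors by \(O(\mu_\ell^{Q-2})\) straight from Proposition~\ref{propptwise}. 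You absorb them into \(\int\rho^2w_\ell^{Q^*}\ge c\mu_\ell^2\), using \(Q\ge4\). You finish with \(\lambda_\ell s-K(n)^{-2}s^{2/Q^*}\le0\) for \(s=\int w_\ell^{Q^*}\in[0,1]\).
\end{itemize}
The ingredients end up the same, since Corollary~\ref{estimatering} is itself derived from the pointwise bound and from \(\int_{B_{\mu_\ell}}u_\ell^2\ge c\mu_\ell^2\). The paper's H\"older step is more economical, because the nonlinear term then needs no decay information at all. Your version bypasses the corollary, at the price of the extra lower bound on \(\int\rho^2w_\ell^{Q^*}\). You could equally absorb your \(O(\mu_\ell^{Q-2})\) error into \(\ell\int w_\ell^2\ge c\ell\mu_\ell^2\).

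A minor inconsistency: if the leading coefficient is to be \(1+C\rho_0^2\), the cross term \(2C\rho_0^3|\nabla_bw_\ell|\,|T\tilde w|\le\rho_0^2|\nabla_bw_\ell|^2+C\rho_0^4|T\tilde w|^2\) forces \(\rho_0^4\), not \(\rho_0^6\), in your displayed pointwise bound. Your final inequality already carries \(\rho^4|T_\theta w_\ell|^2\), so nothing breaks.

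The precise form of the \(T_\theta\)-expansion does not matter either. Since \(T_\theta=T\) at the origin, near it you can write \(T=a\,T_\theta+{}\)(bounded horizontal terms) with \(a\) bounded. This trades \(\rho^4|T\tilde w|^2\) for \(C\rho^4|T_\theta w_\ell|^2+C\rho^2|\nabla_bw_\ell|^2\), so the commutator fallback is unnecessary.
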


\begin{proof}
Multiplying equation \eqref{eqnEL}
by \(\eta_\ell^2u_\ell\) and integrating by parts, we obtain
\begin{equation}\label{eq:cutoff-energy-identity}
\int|\nabla_bw_\ell|^2\,dv_\theta
+
\ell\int w_\ell^2\,dv_\theta=
\lambda_\ell
\int\eta_\ell^2u_\ell^{Q^*}\,dv_\theta \\
+
\int|\nabla_b\eta_\ell|^2u_\ell^2\,dv_\theta.
\end{equation}

By H\"older's inequality and the normalization of \(u_\ell\) in \eqref{eqnmin},
\[
\int\eta_\ell^2u_\ell^{Q^*}\,dv_\theta
\le
\Big(\int w_\ell^{Q^*}\,dv_\theta\Big)^{2/Q^*}
\Big(\int_Mu_\ell^{Q^*}\,dv_\theta\Big)^{(Q^*-2)/Q^*}
=
\Big(\int w_\ell^{Q^*}\,dv_\theta\Big)^{2/Q^*}.
\]
Moreover, using the uniform bound for \(\nabla_b\eta_\ell\) and applying Corollary~\ref{estimatering} with the fixed radius \(r_P\), we have
\[
\int|\nabla_b\eta_\ell|^2u_\ell^2\,dv_\theta
\le
C\int_{B_{r_P}(q_\ell)}u_\ell^2\,dv_\theta 
\le
C\int_{B_{r_P/2}(q_\ell)}u_\ell^2\,dv_\theta 
\le
C\int w_\ell^2\,dv_\theta \le \frac{\ell}{2}\int w_\ell^2\,dv_\theta
\]
when \(\ell\) is sufficiently large.
It follows from \eqref{eq:cutoff-energy-identity} that
\begin{equation}\label{eq:cutoff-below-sharp}
\int|\nabla_bw_\ell|^2\,dv_\theta
+
\frac{\ell}{2}\int w_\ell^2\,dv_\theta
\le
\lambda_\ell
\Big(\int w_\ell^{Q^*}\,dv_\theta\Big)^{2/Q^*}\le K(n)^{-2}
\Big(\int w_\ell^{Q^*}\,dv_\theta\Big)^{2/Q^*}.
\end{equation}

We now compare the right-hand side of
\eqref{eq:cutoff-below-sharp} with the sharp inequality on the Heisenberg
group.  Define
\[
\widetilde w_\ell(\xi)
:=
w_\ell(\Psi_\ell(\xi)),
\qquad
\xi\in B_{r_P}(0),
\]
and extend \(\widetilde w_\ell\) by zero outside \(B_{r_P}(0)\).
Since \(w_\ell\) vanishes near \(\partial B_{r_P}(q_\ell)\), this extension
belongs to \(S^{1,2}(\mathbb H^n)\).

By the inverse form of the Jerison--Lee frame expansion \eqref{eq:JL-frame-comparison}, together with
Young's inequality, we have
\[
|\nabla_H\widetilde w_\ell(\xi)|^2
\le
(1+C\rho(\Psi_\ell(\xi))^2)
|\nabla_bw_\ell|^2(\Psi_\ell(\xi))
+
C\rho(\Psi_\ell(\xi))^4
|T_\theta w_\ell|^2(\Psi_\ell(\xi)).
\]
Moreover, the inverse form of \eqref{eq:JL-volume-comparison} gives
\(
dv_0=(1+O(\rho^2))\Psi_\ell^*dv_\theta
\)
and therefore yields
\begin{equation}\label{eqexpnablab}
\int_{\mathbb H^n}|\nabla_H\widetilde w_\ell|^2\,dv_0
\le
\int |\nabla_bw_\ell|^2\,dv_\theta
+
C\int \rho^2|\nabla_bw_\ell|^2\,dv_\theta  
+
C\int \rho^4|T_\theta w_\ell|^2\,dv_\theta .
\end{equation}
By the inverse volume comparison and
\eqref{eq:cutoff-mass-lower}, the mean value theorem gives
\begin{equation}\label{eqexpqstar}
\Big(\int w_\ell^{Q^*}\,dv_\theta\Big)^{2/Q^*}
\le
\Big(\int_{\mathbb H^n}
\widetilde w_\ell^{Q^*}\,dv_0\Big)^{2/Q^*}+
C\int\rho^2w_\ell^{Q^*}\,dv_\theta.
\end{equation}
Combining \eqref{eq:cutoff-below-sharp},
\eqref{eqexpnablab}, and \eqref{eqexpqstar} with the sharp
Folland--Stein inequality on \(\mathbb H^n\), we obtain
\[
\begin{aligned}
\int |\nabla_bw_\ell|^2\,dv_\theta
+\frac{\ell}{2}\int w_\ell^2\,dv_\theta
&\le
K(n)^{-2}
\Big(\int w_\ell^{Q^*}\,dv_\theta\Big)^{2/Q^*} \\
&\le
K(n)^{-2}
\Big(\int_{\mathbb H^n}\widetilde w_\ell^{Q^*}\,dv_0\Big)^{2/Q^*}
+
C\int\rho^2w_\ell^{Q^*}\,dv_\theta \\
&\le
\int_{\mathbb H^n}|\nabla_H\widetilde w_\ell|^2\,dv_0
+
C\int\rho^2w_\ell^{Q^*}\,dv_\theta \\
&\le
\int |\nabla_bw_\ell|^2\,dv_\theta
+
C\int\rho^2|\nabla_bw_\ell|^2\,dv_\theta \\
&\quad
+
C\int\rho^4|T_\theta w_\ell|^2\,dv_\theta
+
C\int\rho^2w_\ell^{Q^*}\,dv_\theta .
\end{aligned}
\]
This proves the lemma.
\end{proof}
	\begin{proof}[Proof of Theorem~\ref{Mainthm}]
		Now we estimate the right-hand side in Lemma~\ref{lemmapohozaev}. Firstly, by Proposition~\ref{propptwise}, we have
		\begin{equation}\label{estLQ}
			\int \rho^2 w_\ell^{Q^*}\,dv_\theta \le C \mu_\ell^2\int_{\mathbb{H}^n} \rho^2 v_0^{Q^*}\,dv_0 \le C\mu_\ell^2.
		\end{equation}
		Multiplying the equation satisfied by \(u_\ell\) by \(\rho^2 \eta_\ell^2 u_\ell\) and integrating by parts, we have
		\[
		\begin{aligned}
			\lambda_\ell \int \rho^2 \eta_\ell^2 u_\ell^{Q^*}\,dv_\theta =& \int \rho^2 \eta_\ell^2 |\nabla_b u_\ell|^2\,dv_\theta + 2\int \rho \eta_\ell^2 u_\ell  \nabla_b \rho \nabla_b u_\ell\,dv_\theta + 2\int \rho^2 \eta_\ell u_\ell  \nabla_b \eta_\ell \nabla_b u_\ell\,dv_\theta\\
			& + \ell \int \rho^2 \eta_\ell^2 u_\ell^2\,dv_\theta\\
			\ge & \frac{1}{2}\int \rho^2|\nabla_b w_\ell|^2\,dv_\theta - C\int \rho^2 |\nabla_b \eta_\ell|^2 u_\ell^2\,dv_\theta - C\int |\nabla_b \rho|^2 \eta_\ell^2 u_\ell^2\,dv_\theta + \ell \int \rho^2 w_\ell^2\,dv_\theta\\
			\ge &\frac{1}{2}\int \rho^2|\nabla_b w_\ell|^2\,dv_\theta - C\int w_\ell^2\,dv_\theta.
		\end{aligned}
		\]
		Together with \eqref{estLQ}, we have
		\begin{equation}\label{estgrad}
			\int \rho^2 |\nabla_b w_\ell|^2\,dv_\theta \le C\mu_\ell^2 + C\int w_\ell^2\,dv_\theta.
		\end{equation}
		Similarly we have
		\begin{equation}\label{estgrad2}
			\int \rho^2 \eta_\ell^2 |\nabla_b u_\ell|^2\,dv_\theta \le C\mu_\ell^2  + C\int w_\ell^2\,dv_\theta.
		\end{equation}
		We next estimate the term involving \(T_\theta w_\ell\). We use the Folland--Stein subelliptic estimate in the form stated by Jerison--Lee \cite[Proposition 5.7(c)]{JL1987}. Since the Reeb direction \(T_\theta\) has weighted order two in the Folland--Stein calculus, this estimate implies that, for every \(q\in M\) and every \(\varphi\in C_c^\infty(B_{2r_P}(q))\),
		\[
		\|T_\theta\varphi\|_{L^2(M)} \leq C( \|-\Delta_b\varphi\|_{L^2(M)} +\|\varphi\|_{L^2(M)}),
		\]
		where \(C\) is independent of \(q\). The uniformity follows from the compactness of \(M\) and the uniform control of the local pseudohermitian frames. Moreover, integrating by part we have
		\[
		\int_M (-\Delta_b \varphi + \ell \varphi)^2\,d v_\theta = \int_M (-\Delta_b \varphi)^2 + 2 \ell |\nabla_b \varphi|^2 +  \ell^2\varphi^2\,dv_\theta \ge \int_M (-\Delta_b \varphi)^2\,d v_\theta
		\]
		Combining the preceding two estimates, we obtain
		\begin{equation}\label{Testimate}
			\|T_\theta\varphi\|_{L^2(M)} \leq C( \|(-\Delta_b+ \ell)\varphi\|_{L^2(M)} +\|\varphi\|_{L^2(M)}),
		\end{equation}
		
		Now we apply  Caccioppoli estimate to \(\int \rho^4|Tu_\ell|^2\). Firstly, for \(R_0 > 0\), using Arzela--Ascoli theorem, we have
		\[
		v_\ell \to v_0 \quad \text{in } \Gamma^{2,\beta}(B_{R_0}(0))
		\]
		in the sense of subsequences. As a result, we have
		\begin{equation}\label{estTcentre}
			\int_{B_{R_0\mu}(q_\ell)} \rho^4|T_\theta w_\ell|^2\,dv_\theta \le C \mu_\ell^2 \int_{B_{R_0}(0)} \rho^4|T v_0|^2\,dv_0 \le C\mu_\ell^2.
		\end{equation}
		Let \(K_\ell\) be the integer determined by
		\[
		2^{K_\ell}\mu_\ell R_0\le r_P<2^{K_\ell+1}\mu_\ell R_0 .
		\]
		For \(0\le k\le K_\ell\), we denote \(U_k = B_{2^{k+1}\mu_{ell
		} R_0}(q_\ell)\setminus B_{2^{k}\mu_{ell} R_0}(q_\ell)\) and \(U_k^\prime = B_{2^{k+2}\mu_ell R_0}(q_{\ell})\setminus B_{2^{k-1}\mu_ell R_0}(q_{\ell})\). Let \(\chi_k\) be some smooth cut-off functions supported in \(U_k^\prime\) and \(\chi_k = 1\) on \(U_k\). Moreover, we can choose \(\chi_k\) such that \(|\nabla_b \chi_k| \le C(2^k\mu_{\ell} R_0)^{-1}\) and \(\Delta_b \chi_k \le C(2^{k}\mu_ell R_0)^{-2}\). Then we have
		\begin{equation}
			\begin{aligned}
				\int_{{U_k}(q_{\ell})} \rho^4|T_\theta w_{\ell}|^2\,dv_\theta \le &\int_{U_k^\prime} (2^{k+1}\mu_{\ell} R_0)^4 | T_\theta (\chi_k w_{\ell})|^2\,dv_\theta\\
				\le &C (2^{k+1}\mu_{\ell} R_0)^4\int_{U_k^\prime} |- \Delta_b (\chi_k w_\ell) + \ell (\chi_k w_\ell)|^2 + (\chi_k w_\ell)^2\,dv_\theta\\
				\le &C (2^{k+1}\mu_{\ell} R_0)^4\int_{U_k^\prime} \lambda_\ell^2 u_\ell^{2Q^*-2} + (\Delta_b \chi_k)^2 u_\ell^2 + |\nabla_b \chi_k|^2 |\nabla_b u_\ell|^2\\
				& \qquad \qquad \qquad+ |\nabla_b \chi_k|^2u_\ell^2 + |\nabla_b u_\ell|^2 + u_\ell^2\,dv_\theta\\
				\le &C \int_{U_k^\prime} \rho^4 \lambda_\ell^2 u_\ell^{2Q^*-2} + \rho^2|\nabla_b u_\ell|^2 + u_\ell\,dv_\theta\\
			\end{aligned}
		\end{equation}
		Sum \(k\), together with \eqref{estgrad} we have
		\begin{equation}
			\int_{M\setminus B_{R_0\mu_\ell}(q_\ell)} \rho^4|T_\theta w_\ell|^2\,dv_\theta \le C \int \rho^4 \lambda_\ell^2 u_\ell^{2Q^*-2} + \rho^2|\nabla_b u_\ell|^2 + u_\ell^2\,dv_\theta \le C\mu_\ell^2 + C \int w_\ell^2\,dv_\theta.
		\end{equation}
		Combining this with \eqref{estLQ}, \eqref{estgrad} and \eqref{estTcentre} we have
		\[
		\ell \int w_\ell^2\,dv_\theta \le C \mu_\ell^2 + C \int w_\ell^2\,dv_\theta.
		\]
		For all sufficiently large \(\ell\), the last term can be absorbed into the left-hand side; hence
		\[
		\ell\int w_\ell^2\,dv_\theta\le C\mu_\ell^2 .
		\]
		On the other hand, by the blow--up convergence and \(v_0(0)=1\),
		\[
		\int w_\ell^2\,dv_\theta \ge \int_{B_{\mu_\ell}(q_\ell)}u_\ell^2\,dv_\theta = \mu_\ell^2\int_{B_1(0)}v_\ell^2\,dv_{\theta_\ell} \ge c\mu_\ell^2 .
		\]
		Thus
		\[
		c\ell\mu_\ell^2 \le \ell\int w_\ell^2\,dv_\theta \le C\mu_\ell^2,
		\]
		which is impossible as \(\ell\to\infty\). This contradiction proves Theorem~\ref{Mainthm}.
	\end{proof}
	
	\appendix
	\section{The local estimate of the Green function}\label{appendixgreen}
	Throughout this appendix, \((M,\theta)\) is a compact strictly pseudoconvex pseudohermitian CR manifold of dimension \(2n+1\)  for any $n\geq 1$. The purpose of this appendix is to record the local two-sided estimate for the Green kernel of the fixed coercive operator
	\[
	L_1=-\Delta_b+1.
	\]
	The proof follows the same philosophy as the Green-function comparison argument used by Li--Ricciardi~\cite{LR2003}, but the required singular estimate for the Green kernel is obtained from the heat kernel estimates of Jerison--Sanchez-Calle~\cite{JS1986} together with the ball-box theorem of Nagel--Stein--Wainger~\cite{NSW1985}.
	
	We first recall the definition of the Carnot--Carath\'eodory distance used in the heat kernel estimates.
	
	\begin{definition}
		Let \((M,\theta)\) be a pseudohermitian manifold. A piecewise smooth curve \(\gamma:[0,1]\to M\) is called horizontal if \(\dot{\gamma}(t)\in H_{\gamma(t)}\) for all \(t\in[0,1]\). The Carnot--Carath\'eodory distance between two points \(p,q\in M\) is defined by
		\[
		d_{\theta}(p,q)=\inf\Big\{\int_0^1|\dot{\gamma}(t)|_\theta\,dt:\gamma(0)=p,\gamma(1)=q,\gamma\text{ is horizontal}\Big\}.
		\]
		where \(|\dot{\gamma}(t)|_\theta^2=g_\theta(\dot{\gamma}(t),\dot{\gamma}(t))\), and \(g_\theta(X,Y)=d\theta(X,JY)\) is the Levi metric on \(H\).
	\end{definition}
	
	For \(a\in M\), let \(\Psi_a\) be the pseudohermitian normal coordinate map centered at \(a\).  We write
	\[
	\rho_a(x):=\rho_0(\Psi_a^{-1}(x)), \qquad x\in B_{r_{\rm JL}}(a).
	\]
	Thus the subscript of \(\rho_a\) always denotes the center, or pole, and the argument denotes the variable point.
	
	By the Ball--Box theorem of Nagel--Stein--Wainger~\cite{NSW1985}, the Carnot--Carath\'eodory distance is locally equivalent to the pseudohermitian distance \(\rho\) defined in Section~\ref{sectionblowup}. More precisely, there exist constants \(A_0\ge1\), \(C_B\ge1\), and \(r_{Box}\in(0,r_{JL})\), depending only on \((M,\theta)\), such that for every \(a\in M\) and every \(x\in B_{r_{Box}}(a)\),
	\begin{equation}\label{equbox}
		A_0^{-1}\rho_a(x)\le d_\theta(a, x)\le A_0\rho_a(x),
	\end{equation}
	and
	\[
	C_B^{-1} r^Q\le |B_\theta (a, r)| \le C_B r^Q, \quad 0 < r < r_{Box},
	\]
	where \(B_\theta(a,r) = \{x \in M: d_\theta(a,x) < r\}\). The constants above are uniform for all \(a\in M\) since \(M\) is compact.
	
	Now we turn to the heat kernel estimates of Jerison--S\'anchez-Calle~\cite{JS1986}. Although the original statement of the heat kernel estimates is stated for sums of squares of H\"ormander vector fields, the same estimate applies to the pseudohermitian sub-Laplacian \(\Delta_b\). We spell out the point needed for this application.
	
	Recall that in a local orthonormal horizontal frame, the pseudohermitian sub-Laplacian \(\Delta_b\) is a H\"ormander operator of sum-of-squares type modulo smooth first-order terms.  The final remark in~\cite{JS1986} states that the same bounds hold for such lower-order perturbations, provided the corresponding basic \(L^2\) energy estimate used in their proof is available. For \(\Delta_b\), this estimate follows directly from formal self-adjointness. Indeed, if \(w\in C_c^\infty(\R_\sigma\times M)\), then
	\[
	(\partial_\sigma w, \Delta_b w)_{L^2(\mathbb{R} \times M)}  =-\int_{\mathbb{R}} \int_M\langle\nabla_b \partial_\sigma w, \nabla_b w\rangle_\theta\,d V_\theta\,d \sigma =-\frac{1}{2} \int_{\mathbb{R}} \frac{d}{d \sigma}\|\nabla_b w(\sigma, \cdot)\|_{L^2(M)}^2\,d \sigma=0.
	\]
	Consequently, if \(P=\partial_\sigma-\Delta_b\), then
	\[
	\|\partial_\sigma w\|_{L^2(\mathbb{R} \times M)}^2=(\partial_\sigma w, P w)_{L^2(\mathbb{R} \times M)} \leq\|\partial_\sigma w\|_2\|P w\|_2,
	\]
	and hence
	\[
	\|\partial_\sigma w\|_2 \leq\|P w\|_2 .
	\]
	This is the exact estimate needed in the final remark of~\cite{JS1986}. Therefore, the following  local estimate of heat kernel holds for the pseudohermitian sub-Laplacian \(\Delta_b\): Let \(H(\sigma ; x, z)\) be the heat kernel of \(\Delta_b\), that is,
	\[
	e^{\sigma \Delta_b} f(x)=\int_M H(\sigma ; x, z) f(z)\,d V_\theta(z), \quad \sigma>0
	\]
	By the preceding discussion and the estimates in \cite{JS1986}, there exists some constants \(C_1, C_2, C_3, C_4>0\), \(r_{JS}>0\) and \(\sigma_{JS}>0\) such that the heat kernel satisfies the following local estimates. If \(d_\theta(x, z) < r_{JS}\), and \(0<\sigma<\sigma_{JS}\), then
	\[
	H(\sigma ; x, z) \leq \frac{C_1}{|B_\theta(x, \sqrt{\sigma})|} e^{-C_2 \frac{d_\theta(x, z)^2}{\sigma}},
	\]
	and if \(d_\theta(x, z)^2 \leq \sigma \leq 2d_\theta(x, z)^2\), then
	\[
	H(\sigma ; x, z) \geq \frac{C_3}{|B_\theta(x, \sqrt{\sigma})|} e^{-C_4 \frac{d_\theta(x, z)^2}{\sigma}},
	\]
	provided \(d_\theta(x, z) < r_{JS}\) and \(0<2d_\theta(x, z)^2 <\sigma_{JS}\).
	
	Set \(\sigma_G = \min\{\sigma_{JS}, r_{Box}^2\}\). We now choose a \(r_G > 0\) satisfing
	\[
	r_G < r_{Box}, \quad A_0 r_G < r_{JS}, \quad 2A_0^2 r_G^2 < \sigma_{G}, \quad \sqrt{2} A_0 r_G < r_{Box}.
	\]
	Then, whenever \(0<\rho_a(x)<r_G\) and \(d = d_\theta(a, x)\), one has
	\[
	d < r_{JS}, \quad 2d^2 <\sigma_{G}, \quad \sqrt{\sigma} \le r_{Box}, \text{ for every } d^2 \leq \sigma \leq 2d^2.
	\]
	Thus the lower bound and the ball-box volume estimate can be used simultaneously on the time interval \([d^2, 2d^2]\). The following estimate follows directly:
	\begin{lemma}\label{lemheatjl}
		Let \(H(\sigma ; x, z)\) be the heat kernel of \(\Delta_b\)  Then there exist constants \(C_1, C_2, C_3, C_4>0\), \(r_{G}>0\) and \(\sigma_{G}>0\), depending only on \((M, \theta)\), such that, for every \(a \in M\) and every \(x \in B_{r_G}(a) \setminus \{a\}\), we have
		\[
		H(\sigma ; x, a) \leq \frac{C_1}{\sigma^{Q/2}} e^{-C_2 \frac{\rho_a(x)^2}{\sigma}}, \quad 0<\sigma<\sigma_{G}.
		\]
		Moreover if \(d= d_\theta(a, x)\), then
		\[
		H(\sigma ; x, a) \ge \frac{C_3}{\sigma^{Q/2}} e^{-C_4 \frac{d^2}{\sigma}}, \quad d^2 \leq \sigma \leq 2d^2.
		\]
	\end{lemma}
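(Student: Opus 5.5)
The lemma is a direct translation of the Jerison--S\'anchez-Calle local bounds just recalled, via the Ball--Box comparison \eqref{equbox}. I will take $x=a$-pole form $H(\sigma;x,a)$ and apply the recalled estimates with $z=a$. Since $x\in B_{r_G}(a)$ with $r_G<r_{Box}$, \eqref{equbox} gives $A_0^{-1}\rho_a(x)\le d\le A_0\rho_a(x)$ with $d=d_\theta(a,x)$. In particular $d<A_0r_G<r_{JS}$, so the heat kernel estimates apply. The only remaining work is to replace $|B_\theta(x,\sqrt\sigma)|$ by $\sigma^{Q/2}$ and $d$ by $\rho_a(x)$ in the exponent, using the choices of $r_G$ and $\sigma_G$ made above.

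\textbf{Upper bound.} Take $0<\sigma<\sigma_G\le\min\{\sigma_{JS},r_{Box}^2\}$. Then $\sqrt\sigma<r_{Box}$, and the Ball--Box volume bound centered at $x$ gives $|B_\theta(x,\sqrt\sigma)|\ge C_B^{-1}\sigma^{Q/2}$. The volume bound is uniform in the center, so centering at $x$ rather than $a$ is harmless. Combining this with $d^2\ge A_0^{-2}\rho_a(x)^2$ yields
\[
H(\sigma;x,a)\le C_1C_B\,\sigma^{-Q/2}e^{-C_2A_0^{-2}\rho_a(x)^2/\sigma}.
\]
Renaming constants gives the first inequality.

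\textbf{Lower bound.} For $d^2\le\sigma\le2d^2$ we have $2d^2\le2A_0^2r_G^2<\sigma_G\le\sigma_{JS}$ and $d<r_{JS}$, so the recalled lower estimate is available. Moreover $\sqrt\sigma\le\sqrt2\,d\le\sqrt2A_0r_G<r_{Box}$, so $|B_\theta(x,\sqrt\sigma)|\le C_B\sigma^{Q/2}$. This gives
\[
H(\sigma;x,a)\ge C_3C_B^{-1}\sigma^{-Q/2}e^{-C_4d^2/\sigma}.
\]

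\textbf{Remaining points.} Two details need care; neither is a real obstacle. First, the heat kernel should be symmetric, $H(\sigma;x,a)=H(\sigma;a,x)$, which follows from the formal self-adjointness of $\Delta_b$ with respect to $dv_\theta$. Second, all constants must be uniform in $a$, which follows from the compactness of $M$ and the uniformity already built into $A_0$, $C_B$, $r_{JS}$ and $\sigma_{JS}$.
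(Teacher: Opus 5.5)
Your proposal is correct and is essentially the paper's argument: with the same choices \(\sigma_G=\min\{\sigma_{JS},r_{Box}^2\}\) and the same constraints on \(r_G\), the paper combines the recalled Jerison--S\'anchez-Calle bounds (taken with \(z=a\)) with the ball--box volume estimate and the distance equivalence \eqref{equbox}, exactly as you do. Your symmetry remark is not actually needed, since the recalled bounds apply directly to \(H(\sigma;x,a)\) with the volume centered at \(x\).
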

	We now verify that the kernel obtained by integrating the heat kernel is indeed the Green kernel of the operator
	\[
	L_1=-\Delta_b+1.
	\]
	For each \(a\in M\), define
	\[
	G_a(x) := \int_0^\infty e^{-\sigma}H(\sigma;x,a)\,d\sigma, \quad x\ne a.
	\]
	Equivalently, we write \(G(a,x):=G_a(x)\).
	\begin{lemma}\label{lemgreen}
		For each \(a\in M\), the function \(G_a\) is the Green kernel of \(L_1=-\Delta_b+1\) with pole at \(a\).  More precisely,
		\[
		L_{1}G_a(x)=\delta_a(x)
		\]
		in the sense of distributions, and for every \(\varphi\in C^\infty(M)\),
		\[
		\int_M G_a(x)L_1\varphi(x)\,dv_\theta(x)=\varphi(a).
		\]
		Moreover, \(G_a(x)>0\) for \(x\ne a\), and \(G_a(x)=G_x(a)\).
	\end{lemma}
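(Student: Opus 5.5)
We will prove Lemma~\ref{lemgreen} through the semigroup $e^{\sigma\Delta_b}$ on $L^2(M)$ and the standard facts about the heat kernel $H$. These facts are: $H$ is smooth on $(0,\infty)\times M\times M$, it is symmetric ($H(\sigma;x,z)=H(\sigma;z,x)$ because $\Delta_b$ is formally self-adjoint), it is strictly positive for $\sigma>0$, and it satisfies $\int_M H(\sigma;x,z)\,dv_\theta(z)=1$ because $\Delta_b 1=0$ and $M$ is compact.

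\textbf{Convergence.} First we check that the integral defining $G_a(x)$ converges for $x\neq a$ and that $G_a\in L^1(M)$.
\begin{itemize}
\item Small times $0<\sigma<\sigma_G$: if $x$ is close to $a$, Lemma~\ref{lemheatjl} gives $H\le C\sigma^{-Q/2}e^{-C_2\rho_a(x)^2/\sigma}$. If $x$ is far from $a$, the Jerison--S\'anchez-Calle off-diagonal bounds give that $H$ is uniformly bounded. In both cases the $\sigma$-integral is finite, with value $\le C\rho_a(x)^{2-Q}$ near $a$.
\item Large times $\sigma\ge\sigma_G$: the semigroup property shows $H(\sigma;\cdot,\cdot)$ is uniformly bounded, and the factor $e^{-\sigma}$ gives integrability.
\end{itemize}
Since $\rho_a^{2-Q}$ is integrable against $dv_\theta\sim dv_0$, Tonelli's theorem together with $\int_M H\,dv_\theta=1$ gives $\int_M G_a\,dv_\theta=\int_0^\infty e^{-\sigma}\,d\sigma=1$. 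In particular $G_a\in L^1(M)$.

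\textbf{Identity $\int_M G_a L_1\varphi=\varphi(a)$.} Fix $\varphi\in C^\infty(M)$ and use Fubini, which is justified by the integrability just shown. We have $\int_M H(\sigma;x,a)\,L_1\varphi(x)\,dv_\theta(x)=(e^{\sigma\Delta_b}L_1\varphi)(a)$, using the symmetry of $H$. Since $e^{\sigma\Delta_b}$ commutes with $\Delta_b$ on smooth functions,
\[
e^{-\sigma}e^{\sigma\Delta_b}L_1\varphi=e^{-\sigma}e^{\sigma\Delta_b}(1-\Delta_b)\varphi=-\frac{d}{d\sigma}\bigl(e^{-\sigma}e^{\sigma\Delta_b}\varphi\bigr).
\]
Integrating in $\sigma$ from $0$ to $\infty$ gives $\lim_{\sigma\to0}(e^{\sigma\Delta_b}\varphi)(a)-0$. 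It remains to show $e^{\sigma\Delta_b}\varphi\to\varphi$ pointwise, in fact uniformly, as $\sigma\to0$ for smooth $\varphi$. This follows from $\|e^{\sigma\Delta_b}\varphi-\varphi\|\le\int_0^\sigma\|e^{s\Delta_b}\Delta_b\varphi\|\,ds$ in $L^\infty$, using that $e^{s\Delta_b}$ is an $L^\infty$ contraction (positivity of $H$ and mass one). This identity is exactly $L_1G_a=\delta_a$ in the sense of distributions, because $L_1$ is formally self-adjoint.

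\textbf{Positivity and symmetry.} Positivity for $x\neq a$ follows since the integrand $e^{-\sigma}H(\sigma;x,a)$ is strictly positive. Strict positivity of $H$ comes from the Jerison--S\'anchez-Calle lower bound at short range, together with connectedness and the semigroup identity $H(\sigma+s;x,a)=\int H(\sigma;x,z)H(s;z,a)$ to propagate it; alternatively, one can argue directly from the lower bound in Lemma~\ref{lemheatjl} for $x$ near $a$. Symmetry $G_a(x)=G_x(a)$ is immediate from $H(\sigma;x,a)=H(\sigma;a,x)$.

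\textbf{Main obstacle.} The delicate step is justifying the interchange of integrals near the pole and the limit $\sigma\to0$. This needs global, not only local, control of $H$. We handle it using the $L^\infty$-contractivity and mass-one property rather than pointwise kernel bounds: these make every $\sigma$-slice integrable uniformly, so the required domination holds.
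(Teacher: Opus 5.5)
Your proposal is correct and follows essentially the same route as the paper: Fubini, the identity $e^{-\sigma}e^{\sigma\Delta_b}L_1\varphi=-\frac{d}{d\sigma}\bigl(e^{-\sigma}e^{\sigma\Delta_b}\varphi\bigr)$, $L^\infty$-contractivity to kill the boundary term at $\sigma=\infty$, and symmetry and positivity inherited from the heat kernel. You also supply two justifications the paper leaves implicit: domination for Fubini via $\int_M H(\sigma;x,a)\,dv_\theta(x)=1$, and the uniform convergence $e^{\sigma\Delta_b}\varphi\to\varphi$ as $\sigma\to0$ that produces the term $\varphi(a)$ (your ``alternative'' positivity argument via Lemma~\ref{lemheatjl} only covers $x$ near $a$, but the semigroup-propagation argument on the connected $M$ handles all $x\neq a$).
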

	\begin{proof}
		Let \(P_\sigma = e^{\sigma \Delta_b}\) be the heat semigroup. Since \(\Delta_b\) is self-adjoint and non-positive on \(L^2(M)\), the operator \(L_1 = -\Delta_b + 1\) is strictly positive. The quadratic form associated with \(L_1\) has domain \(S^{1,2}(M)\) and is given by
		\[
		B_1(u,u)=\int_M(|\nabla_bu|^2+u^2)\,dv_\theta.
		\]
		In particular, \(B_1\) is coercive on \(S^{1,2}(M)\), and \(L_1\) is invertible on \(L^2(M)\).
		
		We first verify the distributional identity. Let \(\varphi \in C^\infty(M)\). Then we have
		\[
		\begin{aligned}
			\int_M G_a(x) L_1 \varphi(x) \,d V_\theta(x) =& \int_0^{+\infty} e^{-\sigma} \int_M H(\sigma; x, a) L_1 \varphi(x)\,d V_\theta(x) \,d\sigma\\
			=& \int_0^{+\infty} e^{-\sigma} P_\sigma L_1 \varphi(a) \,d\sigma\\
			=& \int_0^{+\infty} e^{-\sigma} L_1 P_\sigma \varphi(a) \,d\sigma\\
			=& -\int_0^{+\infty} \frac{d}{d\sigma}(e^{-\sigma}  P_\sigma \varphi(a)) \,d\sigma\\
			=& \varphi(a) - \lim_{\sigma \to +\infty} e^{-\sigma} P_\sigma \varphi(a).
		\end{aligned}
		\]
		Since \(M\) is compact and \(P_\sigma\) is \(L^{\infty}\)-contractive, we have
		\[
		|e^{-\sigma} P_\sigma \varphi(a)| \leq e^{-\sigma} \|\varphi\|_{L^\infty(M)} \to 0 \quad \text{as } \sigma \to +\infty.
		\]
		Therefore, the distributional identity holds.
		
		The symmetry follows from the self-adjointness of the heat semigroup:
		\[
		H(\sigma; x, a) = H(\sigma; a, x)
		\]
		and hence
		\[
		G_a(x)= G_x(a).
		\]
		The positivity follows from the positivity of the heat kernel. Finally, away from the diagonal, the integral defining \(G\) is smooth by the hypoellipticity of the heat operator and the local heat kernel estimates. This completes the proof.
	\end{proof}
	
	\begin{proposition}\label{propgreenestimate}
		There exists  constants \(C>0\) and \(r_G>0\), depending only on \((M, \theta)\), such that for every \(a \in M\) and \(x \in B_{r_G}(a)\setminus\{a\}\),
		\[
		C^{-1} \rho_a(x)^{2-Q} \le G_a(x) \le C \rho_a(x)^{2-Q}.
		\]
	\end{proposition}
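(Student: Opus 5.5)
The plan is to integrate the two-sided heat kernel bounds of Lemma~\ref{lemheatjl} in time, using the representation
\[
G_a(x)=\int_0^\infty e^{-\sigma}H(\sigma;x,a)\,d\sigma
\]
from Lemma~\ref{lemgreen}. Throughout, fix \(a\in M\) and \(x\in B_{r_G}(a)\setminus\{a\}\). Write \(\rho=\rho_a(x)\) and \(d=d_\theta(a,x)\). By the ball--box comparison \eqref{equbox}, \(A_0^{-1}\rho\le d\le A_0\rho\), so it suffices to bound \(G_a(x)\) by multiples of \(d^{2-Q}\) or \(\rho^{2-Q}\), whichever is convenient. All constants come from Lemma~\ref{lemheatjl} and \eqref{equbox}, so they are uniform in \(a\).

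\textbf{Lower bound.} Since \(H\ge0\), I restrict the integral to the window \([d^2,2d^2]\). There Lemma~\ref{lemheatjl} gives
\[
H(\sigma;x,a)\ge C_3\sigma^{-Q/2}e^{-C_4}\ge C_3e^{-C_4}(2d^2)^{-Q/2}.
\]
Also \(e^{-\sigma}\ge e^{-2d^2}\ge e^{-\sigma_G}\). Hence
\[
G_a(x)\ge c\,d^2\,d^{-Q}=c\,d^{2-Q}\ge c'\rho^{2-Q}.
\]
The choice of \(r_G\) in the appendix ensures \(2d^2<\sigma_G\), so this window lies inside the range of validity of the lower bound.

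\textbf{Upper bound.} I split \(G_a(x)=\int_0^{\sigma_G}+\int_{\sigma_G}^\infty\).

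For the short-time part, the Gaussian upper bound gives
\[
\int_0^{\sigma_G}C_1\sigma^{-Q/2}e^{-C_2\rho^2/\sigma}\,d\sigma\le C_1\int_0^\infty\sigma^{-Q/2}e^{-C_2\rho^2/\sigma}\,d\sigma.
\]
The substitution \(\sigma=\rho^2 s\) turns the right side into
\[
C\rho^{2-Q}\int_0^\infty s^{-Q/2}e^{-C_2/s}\,ds,
\]
and the last integral is finite because \(Q/2>1\) and the exponential kills the singularity at \(s=0\).

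For the long-time part, I use the semigroup property and Cauchy--Schwarz:
\[
H(\sigma;x,a)=\int_M H(\sigma/2;x,z)H(\sigma/2;z,a)\,dv_\theta(z)\le H(\sigma;x,x)^{1/2}H(\sigma;a,a)^{1/2}.
\]
The on-diagonal value \(H(\sigma;y,y)=\|H(\sigma/2;y,\cdot)\|_{L^2}^2\) is nonincreasing in \(\sigma\), since the semigroup is a contraction on \(L^2\). It is therefore bounded for \(\sigma\ge\sigma_G\) by \(H(\sigma_G;y,y)\), which is at most \(C_1\sigma_G^{-Q/2}\) by Lemma~\ref{lemheatjl} applied on the diagonal (after shrinking \(\sigma_G\) slightly if needed). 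So the tail is bounded by \(C\int_{\sigma_G}^\infty e^{-\sigma}\,d\sigma\le C\), and \(C\le C\rho^{2-Q}\) because \(\rho<r_G\) is bounded.

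\textbf{Main obstacle.} The only delicate point is the large-time tail, where the local Gaussian estimate no longer applies. Two things need checking there. First, that the diagonal bound used above is uniform in \(y\in M\). Second, if one prefers to avoid the diagonal case of Lemma~\ref{lemheatjl}, one can instead use that \(P_{\sigma_G/2}\) maps \(L^1\to L^\infty\) boundedly (ultracontractivity, from the uniform upper bound) and then use \(L^\infty\)-contractivity for \(\sigma\ge\sigma_G\). Either route gives a uniform constant, and combining the three estimates proves the proposition.
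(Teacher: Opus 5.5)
Your proof is correct and follows the paper's argument: the lower bound comes from restricting to the window $[d^2,2d^2]$ and applying \eqref{equbox}, and the upper bound comes from splitting at $\sigma_G$ and rescaling the short-time Gaussian integral. The only difference is in the tail $\sigma\ge\sigma_G$. The paper writes $H(\sigma;a,x)=\int_M H(\sigma-\sigma_G/2;a,z)H(\sigma_G/2;z,x)\,dv_\theta(z)\le\sup_{M\times M}H(\sigma_G/2;\cdot,\cdot)$ and gets finiteness from hypoellipticity and compactness, which is essentially your fallback route; your main route (Cauchy--Schwarz plus the $L^2$-monotonicity of $H(\sigma;y,y)$) instead gives an explicit uniform constant from the diagonal case of Lemma~\ref{lemheatjl}, obtained by letting $x\to a$.
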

	\begin{proof}
		For the upper bound, split the Green integral as
		\[
		G_a(x) \le C_1\int_0^{\sigma_G} e^{-\sigma} H(\sigma; a, x) \,d\sigma + \int_{\sigma_G}^{+\infty} e^{-\sigma} H(\sigma; a, x) \,d\sigma = I_1 + I_2.
		\]
		For \(I_1\), with the change of variables \(s = \rho_a(x)^2/\sigma\), Lemma~\ref{lemheatjl} implies
		\[
		I_1 \le  C\int_0^{\sigma_G} \sigma^{-\frac{Q}{2}} e^{-C_2 \frac{\rho_a(x)^2}{\sigma}}\,d\sigma = C \rho_a(x)^{2-Q} \int_{\rho_a(x)^2/\sigma_G}^{+\infty} s^{\frac{Q}{2}-2} e^{-C_2 s} \,ds \le C \rho_a(x)^{2-Q}.
		\]
		For \(I_2\), since \(\sigma_G\) is fixed, the semigroup property gives, for \(\sigma \ge \sigma_G\),
		\[
		H(\sigma; a, x) = \int_M H(\sigma - \sigma_G/2; q, z) H(\sigma_G/2; z, x) \,d V_\theta(z) \le \sup_{z, x \in M} H(\sigma_G/2; z, x).
		\]
		The latter supremum is finite by hypoellipticity and compactness. Therefore
		\[
		I_2 \le C \int_{\sigma_G}^{+\infty} e^{-\sigma} \,d\sigma \le C.
		\]
		Since \(Q>2\), the constant term can be absorbed by the singular term, thus
		\[
		G_a(x) \le C \rho_a(x)^{2-Q}.
		\]
		
		For the lower bound, by the choice of \(r_G\), the interval \([d^2,2d^2]\) lies in the small-time range where Lemma \ref{lemheatjl} holds.  Hence, using the positivity of the heat kernel,
		\[
		G_a(x) \ge \int_{d^2}^{2d^2} e^{-\sigma} H(\sigma; a, x) \,d\sigma \ge C_3 \int_{d^2}^{2d^2} \sigma^{-Q/2} e^{-C\frac{d^2}{\sigma}} \,d\sigma \ge C_3 \int_{d^2}^{2d^2} \sigma^{-Q/2} \,d\sigma \ge C^{-1} d^{2-Q}.
		\]
		By the local equivalence \eqref{equbox}, we obtain
		\[
		G_a(x) \ge C^{-1} \rho_a(x)^{2-Q}.
		\]
		Thus the two-sided estimate is established.
	\end{proof}

\end{document}